\newtheorem{theorem}{Theorem}
\newtheorem*{theoremannounce}{Theorem}
\numberwithin{theorem}{section}
\theoremstyle{plain}
\newtheorem*{acknowledgement}{Acknowledgement}
\newtheorem{definition}[theorem]{Definition}
\newtheorem{lemma}[theorem]{Lemma}
\theoremstyle{remark}
\newtheorem{conjecture}{Conjecture}
\newtheorem*{convention}{Convention}
\newtheorem{remark}[theorem]{Remark}
\numberwithin{equation}{section}
\begin{document}
\title[The relative $K$-group in the ETNC, Part II]{On the relative $K$-group in the ETNC, Part II}
\author{Oliver Braunling}
\address{Mathematical Institute, University of Bonn, Endenicher Allee 60, 53115 Bonn, Germany}
\email{oliver.braeunling@math.uni-freiburg.de, obraeunl@uni-bonn.de}
\thanks{The author was supported by DFG GK1821 \textquotedblleft Cohomological Methods
in Geometry\textquotedblright\ and a Junior Fellowship at the Freiburg
Institute for Advanced Studies (FRIAS)}
\subjclass[2000]{Primary 11R23 11G40; Secondary 11R65 28C10}
\keywords{Equivariant Tamagawa number conjecture, ETNC, locally compact modules}

\begin{abstract}
In a previous paper we showed, under some assumptions, that the relative
$K$-group in the Burns-Flach formulation of the\ equivariant Tamagawa number
conjecture (ETNC) is canonically isomorphic to a $K$-group of locally compact
equivariant modules. This viewpoint, as well as the usual one, come with
generator-relator presentations (due to Bass--Swan and Nenashev) and in this
paper we provide an explicit map.

\end{abstract}
\maketitle

Let $A$ be a finite-dimensional semisimple $\mathbb{Q}$-algebra and
$\mathfrak{A}\subset A$ an order. We write $A_{\mathbb{R}}:=A\otimes
_{\mathbb{Q}}\mathbb{R}$. There is the long exact sequence%
\begin{equation}
\cdots\longrightarrow K_{n}(\mathfrak{A})\longrightarrow K_{n}(A_{\mathbb{R}%
})\longrightarrow K_{n-1}(\mathfrak{A},\mathbb{R})\longrightarrow
K_{n-1}(\mathfrak{A})\longrightarrow\cdots\text{.}\label{ler2}%
\end{equation}
In the previous paper \cite{etnclca} we have shown, assuming $\mathfrak{A}$ to
be regular, that there is a canonical isomorphism%
\begin{equation}
K_{n}(\mathfrak{A},\mathbb{R})\cong K_{n+1}(\mathsf{LCA}_{\mathfrak{A}%
})\text{,}\label{ler1}%
\end{equation}
where $\mathsf{LCA}_{\mathfrak{A}}$ is the exact category of locally compact
topological right $\mathfrak{A}$-modules. We are mostly interested in the case
$n=0$. In the formulation of the equivariant Tamagawa number conjecture (ETNC)
of Burns and Flach \cite{MR1884523}, the equivariant Tamagawa numbers live in
the relative $K$-group $K_{0}(\mathfrak{A},\mathbb{R})$. This group has an
explicit presentation due to Bass--Swan, based on generators%
\[
\lbrack P,\varphi,Q]\qquad\qquad\text{with}\qquad\qquad\varphi:P_{\mathbb{R}%
}\overset{\sim}{\longrightarrow}Q_{\mathbb{R}}\text{,}%
\]
where $P,Q$ are finitely generated projective right $\mathfrak{A}$-modules;
modulo some relations. On the other hand, the group $K_{1}(\mathsf{LCA}%
_{\mathfrak{A}})$ has an explicit presentation due to Nenashev, based on
double exact sequences%
\[%
\xymatrix{
A \ar@<1ex>@{^{(}->}[r]^{p} \ar@<-1ex>@{^{(}.>}[r]_{q} & B \ar@<1ex>@{->>}%
[r]^{r} \ar@<-1ex>@{.>>}[r]_{s} & C
}%
\text{,}%
\]
where $A,B,C$ are locally compact right $\mathfrak{A}$-modules; modulo some
other relations. Unfortunately, the isomorphism between these groups, and
between these two concrete presentations, produced by \cite{etnclca} had
remained inexplicit. We fix this issue now.\medskip

Given $[P,\varphi,Q]$, we send it to a double exact sequence called
\textquotedblleft$\left\langle \left\langle P,\varphi,Q\right\rangle
\right\rangle $\textquotedblright, having the shape%
\[
\left[
\xymatrix{
{\prod T_{P}\oplus P\oplus Q\oplus\prod T_{Q}} \ar@<1ex>@{^{(}->}%
[r] \ar@<-1ex>@{^{(}.>}[r] & {\prod T_{P}\oplus P\oplus P_{\mathbb{R}}\oplus
Q\oplus\prod T_{Q}} \ar@<1ex>@{->>}[r] \ar@<-1ex>@{.>>}[r] & {T_{P}%
\oplus\bigoplus P\oplus\bigoplus Q\oplus T_{Q}}
}%
\right]  \text{,}%
\]
where $P$ carries the discrete topology, $P_{\mathbb{R}}:=P\otimes\mathbb{R}$
is topologically a real vector space, and $T_{P}:=P_{\mathbb{R}}/P$ carries
the torus quotient topology (correspondingly for $Q$). The products and direct
sums are indexed over $\mathbb{Z}_{\geq0}$. The four morphisms are complicated
to define: They arise as the sum of various maps, which can be depicted as%
\[%
{\includegraphics[
height=1.855in,
width=2.4154in
]%
{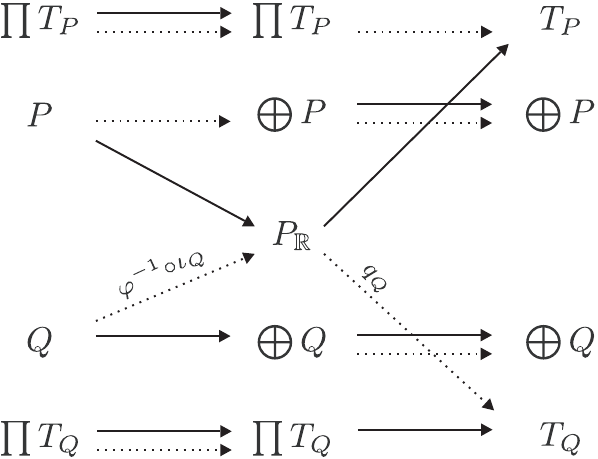}%
}
\]
where we read the objects in the left (resp. middle, right)\ column as the
direct summands appearing in the left (resp. middle, right) term of the double
exact sequence. The solid versus dotted lines indicate which side of the
double exact sequence they belong to. Each of the arrows without a label only
depends on $P$ and $Q$, but not on $\varphi$. A detailed construction of this
double exact sequence is too long for the introduction, and we refer to
\S \ref{sect_ExplicitMap} for details.\ We only gave these brief indications
to give an impression what kind of object we are dealing with. It is complicated.

But it is exactly what we need for an explicit formula:

\begin{theoremannounce}
Let $A$ be a finite-dimensional semisimple $\mathbb{Q}$-algebra and
$\mathfrak{A}\subset A$ an order. Then the map%
\begin{equation}
K_{0}(\mathfrak{A},\mathbb{R})\longrightarrow K_{1}(\mathsf{LCA}%
_{\mathfrak{A}})\label{ler2a}%
\end{equation}
sending $[P,\varphi,Q]$ to the double exact sequence $\left\langle
\left\langle P,\varphi,Q\right\rangle \right\rangle $ is a well-defined
morphism from the Bass--Swan to the Nenashev presentation. If $\mathfrak{A}$
is regular, then this map is an isomorphism.
\end{theoremannounce}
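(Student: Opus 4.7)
The strategy has three parts: (1) verify that the construction really produces a legal double exact sequence in $\mathsf{LCA}_\mathfrak{A}$, (2) check that it respects the Bass--Swan relations, and (3) identify the resulting homomorphism with the canonical isomorphism (\ref{ler1}) of \cite{etnclca}, from which the isomorphism assertion under regularity follows immediately.

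\emph{Step 1 (legality).} First I would check that the diagram really is a double exact sequence in $\mathsf{LCA}_\mathfrak{A}$. The infinite product $\prod T_P$ is compact because $T_P=P_{\mathbb{R}}/P$ is a torus; the infinite direct sum $\bigoplus P$ is discrete; $P_{\mathbb{R}}$ is a real topological vector space. With the explicit arrows of \S\ref{sect_ExplicitMap}, each of the two rows should decompose block-wise into direct sums of obvious short exact sequences, notably $P\hookrightarrow P_{\mathbb{R}}\twoheadrightarrow T_P$ together with Eilenberg-shift style sequences on the $\prod T_P$ and $\bigoplus P$ summands. Admissibility in $\mathsf{LCA}_\mathfrak{A}$ is then immediate.

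\emph{Step 2 (Bass--Swan relations).} I would then verify the defining relations of the Bass--Swan presentation. The composition relation $[P,\varphi,Q]+[Q,\psi,R]=[P,\psi\varphi,R]$ is the technical heart: I would build a three-dimensional array whose appropriate faces realise $\langle\langle P,\varphi,Q\rangle\rangle$, $\langle\langle Q,\psi,R\rangle\rangle$ and $\langle\langle P,\psi\varphi,R\rangle\rangle$, and appeal to Nenashev's $3\times 3$-relation. The swindle summands $\prod T_P$, $\bigoplus P$ and their $Q,R$-analogues are there precisely so that the missing arrows in the enlarged diagram are again natural unlabelled maps, and one can absorb $\varphi$ and $\psi$ cleanly into the middle column. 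The additivity relation under a morphism of short exact sequences of triples reduces to a parallel but easier diagram chase, using that $\prod$ and $\bigoplus$ commute with finite direct sums and that the Nenashev class is additive on direct sums of double exact sequences. The special case $\varphi=\mathrm{id}$ then drops out by setting $\psi$ arbitrary and applying composition.

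\emph{Step 3 (comparison with \cite{etnclca}).} Having a well-defined homomorphism $K_0(\mathfrak{A},\mathbb{R})\to K_1(\mathsf{LCA}_\mathfrak{A})$, I would compare it with the canonical isomorphism (\ref{ler1}), which arises as a connecting map in the localisation sequence underlying (\ref{ler2}). Tracking a generator $[P,\varphi,Q]$ through both constructions, the summands $\prod T_P$ and $\bigoplus P$ should be seen to realise concretely the Eilenberg swindle that linearises the abstract connecting homomorphism, so the two homomorphisms agree on every Bass--Swan generator. Under the regularity hypothesis used in \cite{etnclca}, (\ref{ler1}) is an isomorphism, and hence so is our map (\ref{ler2a}).

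The principal obstacle is Step 2, the composition relation: one must pin down the unlabelled arrows just tightly enough that a $3\times 3$-array of double exact sequences can actually be written in the category $\mathsf{LCA}_\mathfrak{A}$ with the swindle summands distributed correctly across the nine positions. Once that calibration is fixed, the additivity verification and the matching with (\ref{ler1}) in Step 3 should reduce to routine bookkeeping.
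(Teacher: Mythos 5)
Your Steps 1 and 2 follow the paper's strategy. You correctly identify the composition relation (Relation~B) as the technical heart and correctly guess that Nenashev's $3\times3$-relation drives the argument. However, you are missing a key ingredient that the paper has to establish first and uses repeatedly: the fact that the swap automorphism on $X\oplus X$, i.e.\ $(x,y)\mapsto(y,x)$, has trivial class in $K_{1}(\mathsf{LCA}_{\mathfrak{A}})$ for every object $X$ (Lemma~\ref{Lemma_SwapIsZeroInLCAA}). This statement is \emph{false} in a generic exact category (cf.\ Remark~\ref{rmk_InversionIsTrivial}: it fails in $K_{1}(\mathbb{R})$), and its proof requires a category-specific argument (reduction to compact/discrete/vector pieces, an Eilenberg swindle, and the vanishing of $K_{1}(\mathfrak{A})\to K_{1}(\mathsf{LCA}_{\mathfrak{A}})$). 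Without this lemma, the column contributions in the $3\times3$-diagrams you propose for Lemmas~\ref{sv1} and~\ref{sv2} do not vanish, and the shuffle argument never closes. If you plan to carry out Step~2, you must isolate and prove this swap-triviality first.

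Your Step 3 is where the approach genuinely diverges, and as stated it has a gap. You propose to ``track a generator $[P,\varphi,Q]$ through both constructions'' and conclude that $\vartheta$ agrees with the abstract isomorphism (\ref{ler1}). But (\ref{ler1}) is produced in \cite{etnclca} by a zig-zag of localization sequences at the level of spectra; it does not come with any explicit formula on Bass--Swan generators. That is precisely the problem the whole paper is solving, so the comparison cannot be done by mere bookkeeping. The paper instead avoids identifying $\vartheta$ with (\ref{ler1}) directly: it proves that $\vartheta$ fits into a ladder between the two long exact sequences (\ref{ler2}) and the localization sequence of $\mathsf{LCA}_{\mathfrak{A}}$, checking commutativity of the two squares $X$ and $Y$ adjacent to $\vartheta$ (Lemmas~\ref{sw1} and~\ref{sw2}), and then applies the Five Lemma. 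The commutativity of square $Y$ is the genuinely hard part: it requires unwinding the boundary map $\partial:K_{1}(\mathsf{LCA}_{\mathfrak{A}})\to K_{0}(\mathfrak{A})$ via Schlichting's localization and the quotient category $\mathsf{LCA}_{\mathfrak{A}}/\mathsf{LCA}_{\mathfrak{A},cg}$, and then performing an explicit lift of the Nenashev loop along the fibration of Gillet--Grayson spaces to read off the endpoint $(Q,P)$. Your Eilenberg-swindle heuristic gestures toward the right phenomena, but it does not substitute for this computation.
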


The mere existence of the map (not needing $\mathfrak{A}$ regular) will be
Theorem \ref{thm_VarThetaWellDefined} and is independent of the previous paper
\cite{etnclca}. The proof of isomorphy will be Theorem \ref{thm_IdentifySeq}.
The latter is a lot harder and will require us to work with some explicit
simplicial model of $K$-theory, based on the work of Gillet and
Grayson.\medskip

Why is this comparison map so complicated?\medskip

Indeed, the apparent complexity of $\left\langle \left\langle P,\varphi
,Q\right\rangle \right\rangle $ is very misleading. It feels appropriate to
compare this situation to Leibniz's formula for the determinant%
\[
\det M=\sum_{\pi\in\mathfrak{S}_{n}}\operatorname*{sgn}(\pi)M_{1,\pi(1)}\cdot
M_{2,\pi(2)}\cdots M_{n,\pi(n)}\text{.}%
\]
Hardly anyone would use this formula for working with determinants in a
concrete computation, or for any practical purpose, but it is a closed-form
formula which is valid for \textit{all} matrices $M$. The same is true for
$\left\langle \left\langle P,\varphi,Q\right\rangle \right\rangle $. In
general, it is an overcomplicated representative for the underlying $K_{1}%
$-class, but it is a \textit{uniform} closed-form expression valid for all
Bass--Swan representatives $[P,\varphi,Q]$.\medskip

Since the ETNC is a conjecture in arithmetic geometry, readers might not be
particularly enthusiastic about simplicial sets. We have gone to some lengths
in order to write down the proof of the above theorem as a direct verification
using only the Bass--Swan and Nenashev presentations alone, mostly. Yet, one
step of the proof inevitably needs us to touch algebraic $K$-theory as a
space, or more concretely as a simplicial set. We will explain everything we
need for this in a self-contained fashion.

As for \cite{etnclca}, the entire theory should exist without having to assume
that $\mathfrak{A}$ is regular. This will require a slightly different
definition of $\mathsf{LCA}_{\mathfrak{A}}$. Roughly speaking, $\mathsf{LCA}%
_{\mathfrak{A}}$ corresponds to $G$-theory, so just as one has to restrict to
projective modules for $K(\mathfrak{A})$, a similar variation of the
definition of $\mathsf{LCA}_{\mathfrak{A}}$ will work for general
$\mathfrak{A}$. See the introduction of \cite{etnclca} for a few more details
on the r\^{o}le of $G$-theory here.

\begin{acknowledgement}
This work is heavily inspired by Dustin Clausen's paper \cite{clausen}.
Bernhard K\"{o}ck had concretely asked me whether the elements I discussed in
my earlier papers on locally compact modules could be made explicit in the
Nenashev presentation. This article provides a concrete answer. Moreover, I
thank B. Chow, M. Flach, D. Grayson, A. Huber, B. Morin, A. Nickel, M. Wendt,
C. Winges for discussions and/or correspondence. I thank the FRIAS, where the
universal definition of $\left\langle \left\langle P,\varphi,Q\right\rangle
\right\rangle $ was found, for its excellent working conditions and rich
opportunities for academic exchange.
\end{acknowledgement}

\section{The explicit presentations}

\begin{convention}
For us, a ring is always unital and associative. They are not assumed to be
commutative. We freely use the category $\mathsf{LCA}_{\mathfrak{A}}$, see
\cite{etnclca} for its definition and further background.
\end{convention}

We recall the basic design of the two explicit presentations in the format we
shall use.

\subsection{Bass--Swan's presentation}

Let $\varphi:R\rightarrow R^{\prime}$ be a morphism of rings. We will drop the
map $\varphi$ from the notation and simply write $M_{R^{\prime}}:=M\otimes
_{R}R^{\prime}$ for the base change along $\varphi$, where $M$ is an arbitrary
right $R$-module.

Let $\mathsf{Sw}(R,R^{\prime})$ be the following category: Its objects are
triples $(P,\alpha,Q)$, where $P,Q$ are finitely generated projective right
$R$-modules and $\alpha:P_{R^{\prime}}\overset{\sim}{\longrightarrow
}Q_{R^{\prime}}$ is an isomorphism of right $R^{\prime}$-modules. A morphism
$f:(P_{1},\alpha_{1},Q_{1})\rightarrow(P_{2},\alpha_{2},Q_{2})$ is a pair of
right $R$-module homomorphisms $p:P_{1}\rightarrow P_{2}$, $q:Q_{1}\rightarrow
Q_{2}$ such that the diagram%
\[%
\xymatrix{
{P_{1,R^{\prime}}} \ar[r]^{p} \ar[d]^{\sim}_{\alpha_1} & {P_{2,R^{\prime}}}
\ar[d]_{\sim}^{\alpha_2} \\
{Q_{1,R^{\prime}}} \ar[r]_{q} & {Q_{2,R^{\prime}}}
}%
\]
commutes.

Define Bass--Swan's relative $K_{0}$-group, which we denote by
\textquotedblleft$K_{0}(R,R^{\prime})$\textquotedblright, as follows:

\begin{enumerate}
\item It is generated by all objects of $\mathsf{Sw}(R,R^{\prime})$. We write
this as $[P,\alpha,Q]$.

\item \textit{(Relation A)} For morphisms $a,b$ in the category $\mathsf{Sw}%
(R,R^{\prime})$, composable as%
\begin{equation}
(P^{\prime},\alpha^{\prime},Q^{\prime})\overset{a}{\longrightarrow}%
(P,\alpha,Q)\overset{b}{\longrightarrow}(P^{\prime\prime},\alpha^{\prime
\prime},Q^{\prime\prime}) \label{lcimez21a}%
\end{equation}
such that the induced composable arrows $P^{\prime}\hookrightarrow
P\twoheadrightarrow P^{\prime\prime}$ and $Q^{\prime}\hookrightarrow
Q\twoheadrightarrow Q^{\prime\prime}$ are exact sequences of right $R^{\prime
}$-modules, impose the relation:%
\begin{equation}
\lbrack P,\alpha,Q]=[P^{\prime},\alpha^{\prime},Q^{\prime}]+[P^{\prime\prime
},\alpha^{\prime\prime},Q^{\prime\prime}]\text{.} \label{l_Swan_RelationA}%
\end{equation}

\item \textit{(Relation B)} For objects $(P,\alpha,Q)$ and $(Q,\beta,S)$
impose the relation:%
\begin{equation}
\lbrack P,\alpha,Q]+[Q,\beta,S]=[P,\beta\circ\alpha,S]\text{.}
\label{l_Swan_RelationB}%
\end{equation}

\end{enumerate}

\subsection{Nenashev's presentation}

Suppose $\mathsf{C}$ is an exact category. Sherman had the idea that every
element in $K_{1}(\mathsf{C})$ should be expressible in a certain normalized
form. To this end, he used the Gillet--Grayson model for the $K$-theory space
$K(\mathsf{C})$ and then showed that every element $\alpha\in K_{1}%
(\mathsf{C})$, i.e. every closed loop in $\pi_{1}K(\mathsf{C})$, is homotopic
to a loop of a special shape, \cite{MR1621689}. This was extended by Nenashev
to give a complete generator-relator presentation of the group in a series of
papers \cite{MR1409623, MR1621690, MR1637539}.

A \emph{double (short) exact sequence} in $\mathsf{C}$ is the datum of two
short exact sequences%
\[
\mathrm{Yin}:A\overset{p}{\hookrightarrow}B\overset{r}{\twoheadrightarrow
}C\qquad\text{and}\qquad\mathrm{Yang}:A\overset{q}{\hookrightarrow}%
B\overset{s}{\twoheadrightarrow}C\text{,}%
\]
where only the maps may differ, but the objects agree for both the Yin and
Yang exact sequence. We write%
\begin{equation}%
\xymatrix{
A \ar@<1ex>@{^{(}->}[r]^{p} \ar@<-1ex>@{^{(}.>}[r]_{q} & B \ar@<1ex>@{->>}%
[r]^{r} \ar@<-1ex>@{.>>}[r]_{s} & C
}
\label{lcimez8}%
\end{equation}
as a shorthand.

We recall Nenashev's presentation in the concrete form for $\mathsf{LCA}%
_{\mathfrak{A}}$.

\begin{theorem}
\label{thm_NenashevStylePresentation}Let $\mathfrak{A}$ be a regular order in
a finite-dimensional semisimple $\mathbb{Q}$-algebra. Then the abelian group
$K_{1}(\mathsf{LCA}_{\mathfrak{A}})$ has the following presentation:

\begin{enumerate}
\item We attach a generator to each double exact sequence%
\[%
\xymatrix{
A \ar@<1ex>@{^{(}->}[r]^{p} \ar@<-1ex>@{^{(}.>}[r]_{q} & B \ar@<1ex>@{->>}%
[r]^{r} \ar@<-1ex>@{.>>}[r]_{s} & C,
}%
\]
where $A$, $B$ and $C$ are locally compact right $\mathfrak{A}$-modules.

\item Whenever the Yin and Yang exact sequences agree, i.e.,%
\begin{equation}%
\xymatrix{
A \ar@<1ex>@{^{(}->}[r]^{p}_{=} \ar@<-1ex>@{^{(}.>}[r]_{p} & B \ar@
<1ex>@{->>}[r]^{r}_{=} \ar@<-1ex>@{.>>}[r]_{r} & C\text{,}
}
\label{l_C2_NenaAgreeIsZero}%
\end{equation}
we declare the class to vanish.

\item Suppose there is a (not necessarily commutative) $(3\times3)$-diagram%
\[%
\xymatrix@W=0.3in@H=0.3in{
A \ar@<1ex>@{^{(}->}[r] \ar@<1ex>@{^{(}.>}[d] \ar@<-1ex>@{^{(}->}%
[d] \ar@<-1ex>@{^{(}.>}[r] & B \ar@<1ex>@{->>}[r] \ar@<-1ex>@{.>>}%
[r] \ar@<1ex>@{^{(}.>}[d] \ar@<-1ex>@{^{(}->}[d] & C \ar@<1ex>@{^{(}%
.>}[d] \ar@<-1ex>@{^{(}->}[d] \\
D \ar@<1ex>@{^{(}->}[r] \ar@<-1ex>@{^{(}.>}[r] \ar@<1ex>@{.>>}[d] \ar@
<-1ex>@{->>}[d] & E \ar@<1ex>@{->>}[r] \ar@<-1ex>@{.>>}[r] \ar@<1ex>@{.>>}%
[d] \ar@<-1ex>@{->>}[d] & F \ar@<1ex>@{.>>}[d] \ar@<-1ex>@{->>}[d] \\
G \ar@<1ex>@{^{(}->}[r] \ar@<-1ex>@{^{(}.>}[r] & H \ar@<1ex>@{->>}%
[r] \ar@<-1ex>@{.>>}[r] & I, \\
}%
\]
whose rows $Row_{i}$ and columns $Col_{j}$ are double exact sequences. Suppose
after deleting all Yin (resp. all Yang) exact sequences, the remaining diagram
commutes. Then we impose the relation%
\begin{equation}
Row_{1}-Row_{2}+Row_{3}=Col_{1}-Col_{2}+Col_{3}\text{.} \label{l_C_Nenashev}%
\end{equation}

\end{enumerate}
\end{theorem}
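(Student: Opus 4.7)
The plan is to split the theorem into its two assertions: well-definedness (no regularity needed) and isomorphy (needing $\mathfrak{A}$ regular). Well-definedness means producing, from each Bass--Swan generator $[P,\varphi,Q]$, a bona fide double short exact sequence in $\mathsf{LCA}_{\mathfrak{A}}$, and then checking that the Bass--Swan relations (A) and (B) are carried to genuine Nenashev relations. That each of the four morphisms in $\langle\langle P,\varphi,Q\rangle\rangle$ is continuous and that the solid and dotted rows are each short exact in $\mathsf{LCA}_{\mathfrak{A}}$ is a direct inspection once the constituent maps indicated by the picture are written out cleanly. The key inputs are (a) the fact that $P$ is discrete, $T_P = P_{\mathbb{R}}/P$ is compact (a torus), and $P_{\mathbb{R}}$ is a Euclidean real vector space, all with compatible $\mathfrak{A}$-actions, and (b) that the countable $\prod$ and $\bigoplus$ indexed by $\mathbb{Z}_{\geq 0}$ behave well in $\mathsf{LCA}_{\mathfrak{A}}$ and serve as Eilenberg-swindle-style buffers that absorb the necessary shifts.

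For Relation (A), given a Bass--Swan short exact sequence $(P',\alpha',Q') \hookrightarrow (P,\alpha,Q) \twoheadrightarrow (P'',\alpha'',Q'')$, the idea is to apply the three constructions $\langle\langle-\rangle\rangle$ and assemble them into a Nenashev-style $(3\times 3)$-diagram of double exact sequences in $\mathsf{LCA}_{\mathfrak{A}}$ whose rows are $\langle\langle P',\alpha',Q'\rangle\rangle$, $\langle\langle P,\alpha,Q\rangle\rangle$, $\langle\langle P'',\alpha'',Q''\rangle\rangle$, so that the relation (\ref{l_C_Nenashev}) forces the desired identity once the three columns are seen to give the trivial class in $K_1$. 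The columns will be vertical double exact sequences whose Yin agrees with Yang, hence vanish by (\ref{l_C2_NenaAgreeIsZero}). For Relation (B), the analogous move arranges $\langle\langle P,\alpha,Q\rangle\rangle$, $\langle\langle Q,\beta,S\rangle\rangle$, and $\langle\langle P,\beta\circ\alpha,S\rangle\rangle$ into a single $(3\times 3)$-diagram and reads off the equation from (\ref{l_C_Nenashev}). The \emph{uniformity} of the definition is essential here: because each arrow in the picture that does not involve $\varphi$ depends only on $P$ and $Q$, the $(3\times 3)$-diagrams assemble functorially without ad hoc choices, and this is exactly what will be recorded as Theorem \ref{thm_VarThetaWellDefined}.

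The harder half is the identification statement. Assuming $\mathfrak{A}$ is regular, the isomorphism (\ref{ler1}) of \cite{etnclca} identifies $K_0(\mathfrak{A},\mathbb{R})$ with $K_1(\mathsf{LCA}_{\mathfrak{A}})$, and it suffices to show that the map agrees with it on Bass--Swan generators $[P,\varphi,Q]$. Both maps are defined a priori at the space level, not at the level of normalized presentations, so the comparison must happen inside some explicit simplicial model. The plan is to use the Gillet--Grayson $G$-construction, in which $K_1$-classes are represented by closed loops of very specific combinatorial shape, together with the Sherman/Nenashev translation of such loops into double short exact sequences. The isomorphism of \cite{etnclca} is constructed through an exact sequence of categories, and one can chase a Bass--Swan generator through that argument to a specific Gillet--Grayson loop; the content of the proof is then that this loop is homotopic, inside the Gillet--Grayson $K$-theory space of $\mathsf{LCA}_{\mathfrak{A}}$, to the loop Nenashev-attached to $\langle\langle P,\varphi,Q\rangle\rangle$.

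The main obstacle, as anticipated, is this last step: producing an explicit simplicial homotopy witnessing that the Gillet--Grayson loop coming from the isomorphism of \cite{etnclca} matches the loop defined by our concrete double sequence. The many ``swindle'' summands $\prod T_P$, $\bigoplus P$, etc., are there precisely so that this homotopy can be written down in closed form; identifying the correct homotopy and verifying it carefully inside Gillet--Grayson is where the real work lies, and this is what Theorem \ref{thm_IdentifySeq} will accomplish. Once established, surjectivity and injectivity of the map follow from the corresponding properties of the \cite{etnclca} isomorphism, completing the proof.
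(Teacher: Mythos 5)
Your proposal does not address the statement you were given. Theorem \ref{thm_NenashevStylePresentation} asserts that $K_{1}(\mathsf{LCA}_{\mathfrak{A}})$ admits the Nenashev generator-and-relator presentation: generators are double short exact sequences, and the relations are (2) vanishing when the Yin and Yang sequences agree and (3) the $(3\times3)$-relation. This is Nenashev's theorem, valid for $K_{1}$ of an exact category, and the paper does not prove it at all --- it is recalled from \cite{MR1409623, MR1621690, MR1637539}. A proof would have to establish, working inside an explicit model such as Gillet--Grayson's, that every closed loop representing a $K_{1}$-class can be normalized to the special loop $e(\kappa)$ of a double exact sequence (Sherman's normalization step), and that two such loops are homotopic if and only if their classes are identified by the imposed relations (Nenashev's completeness step). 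None of this appears in your proposal.

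What you have sketched instead is the strategy for the main comparison theorem of the paper (the unnumbered Theorem in the introduction, carried out as Theorems \ref{thm_VarThetaWellDefined} and \ref{thm_IdentifySeq}): well-definedness of $[P,\varphi,Q]\mapsto\left\langle\left\langle P,\varphi,Q\right\rangle\right\rangle$ by checking Relations A and B against the Nenashev $(3\times3)$-relation, and then isomorphy via a diagram chase in the Gillet--Grayson space. That outline is broadly consistent with the paper's treatment of \emph{those} results, but it takes Theorem \ref{thm_NenashevStylePresentation} as an input rather than proving it: you freely invoke the Nenashev presentation and the relation (\ref{l_C_Nenashev}) as already-known facts. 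As a proof of the statement actually in question, the proposal is therefore empty --- there is no argument that the double-exact-sequence generators exhaust $K_{1}(\mathsf{LCA}_{\mathfrak{A}})$, nor that (\ref{l_C2_NenaAgreeIsZero}) and (\ref{l_C_Nenashev}) generate all relations among them.
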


We shall write \textquotedblleft$K_{1}(\mathsf{LCA}_{\mathfrak{A}%
})_{\operatorname*{Nenashev}}$\textquotedblright\ if we wish to stress that we
mean the above presentation.

\begin{remark}
[Compatibility with literature]\label{rmk_CompatWithLiterature}Our notational
convention is opposite to the one used in Nenashev's papers \cite{MR1409623,
MR1621690, MR1637539}, and closer instead to the one used in Weibel's
$K$-theory book. In particular, if $\varphi:X\rightarrow X$ is an automorphism
of an object in an exact category $\mathsf{C}$, the canonical map
$\operatorname*{Aut}(X)\rightarrow K_{1}(\mathsf{C})$ maps it to%
\[%
\xymatrix{
0 \ar@<1ex>@{^{(}->}[r] \ar@<-1ex>@{^{(}.>}[r] & X \ar@<1ex>@{->>}%
[r]^{\varphi} \ar@<-1ex>@{.>>}[r]_{1} & X
}%
\text{.}%
\]
See \cite[Equation 2.2]{MR1637539} for a discussion of this. Further, the
vertex $(P,P^{\prime})$ in the Gillet--Grayson model lies in the connected
component of $[P^{\prime}]-[P]\in\pi_{0}K(\mathsf{C})$, i.e. also precisely
opposite to Nenashev's conventions, cf. \cite[\S 1, middle of p.
176]{MR1409625}. In order to be particularly sure about which arrows belong to
the Yin or Yang side, we do not only distinguish by top/bottom or left/right
arrows, but also use solid versus dotted arrows.
\end{remark}

The following is a rather easy exercise in the handling of the Nenashev
presentation, but since it illustrates some basic principles in a simple
fashion, we provide all details.

\begin{lemma}
\label{lemma_DoubeIso}Suppose we are given a double exact sequence%
\[%
\xymatrix{
A^{\prime} \ar@<1ex>@{^{(}->}[r] \ar@<-1ex>@{^{(}.>}[r] & A \ar@
<1ex>@{->>}[r] \ar@<-1ex>@{.>>}[r] & A^{\prime\prime}
}%
\]
and a second one (with $B$s instead of $A$s). Suppose we have a diagram%
\begin{equation}%
\xymatrix{
A^{\prime} \ar[d]^{x^{\prime} } \ar@{^{(}->}[r] & A \ar[d]^{x} \ar@
{->>}[r] & A^{\prime\prime} \ar[d]^{x^{\prime\prime}} \\
B^{\prime} \ar@{^{(}->}[r] & B \ar@{->>}[r] & B^{\prime\prime} \\
}
\label{l424A}%
\end{equation}
where $x^{\prime},x,x^{\prime\prime}$ are isomorphisms, and which commutes
irrespective of whether we use the Yin or Yang exact sequences. Then both
double exact sequences agree in $K_{1}(\mathsf{C})_{\operatorname{Nenashev}}$.
\end{lemma}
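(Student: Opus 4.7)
The natural strategy is to realize the claim as a direct instance of Nenashev's $3\times 3$ relation \eqref{l_C_Nenashev}, arranged so that everything except the two rows we want to compare automatically vanishes by the triviality axiom \eqref{l_C2_NenaAgreeIsZero}.

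Concretely, I would assemble the $3\times 3$ diagram whose first row is the given $A$-double exact sequence, whose second row is the given $B$-double exact sequence, and whose third row is the zero double exact sequence $0\hookrightarrow 0\twoheadrightarrow 0$. For the columns, I would use the isomorphisms $x'$, $x$, $x''$ as both the Yin and the Yang injection in the respective column, and take the zero morphism onto $0$ as both the Yin and the Yang surjection. Each column is then a valid double exact sequence precisely because $x'$, $x$, $x''$ are isomorphisms, so that $A'\hookrightarrow B'\twoheadrightarrow 0$ and its analogues are short exact on both the Yin and the Yang sides. Moreover each of these three columns, as well as the bottom row, has identical Yin and Yang sequences, so each represents the zero class in $K_{1}$ by \eqref{l_C2_NenaAgreeIsZero}.

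It then remains to verify the commutativity condition required by the relation. After deleting all Yang arrows, the only genuinely nontrivial squares in the resulting diagram are the top-left and top-right squares in the Yin version of \eqref{l424A}; both commute by hypothesis. The remaining squares either collapse onto $0$ or are copies of these. The same argument applies verbatim after deleting all Yin arrows, this time invoking the Yang version of \eqref{l424A}.

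Applying $Row_{1}-Row_{2}+Row_{3}=Col_{1}-Col_{2}+Col_{3}$ and using the vanishing of $Row_{3}$ and of each $Col_{j}$ from the preceding paragraph, we obtain $Row_{1}=Row_{2}$, which is exactly the desired identity in $K_{1}(\mathsf{C})_{\operatorname{Nenashev}}$. The only mildly delicate point is the bookkeeping of Yin versus Yang labels, in order to invoke the two independent commutativity hypotheses on \eqref{l424A} correctly; apart from that there is no real obstacle, which fits with the author's description of this as an easy illustrative exercise in the Nenashev formalism.
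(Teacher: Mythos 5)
Your proposal is correct and follows the paper's own argument essentially verbatim: the same $3\times 3$ diagram with the $A$- and $B$-sequences as the top two rows, the zero double exact sequence as the third row, the isomorphisms $x'$, $x$, $x''$ supplying both Yin and Yang column maps, and the observation that the columns and bottom row all vanish by the triviality axiom, leaving $Row_1 = Row_2$.
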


\begin{proof}
We have the $(3\times3)$-diagram of double short exact sequences%
\[%
\xymatrix@W=0.3in@H=0.3in{
A^{\prime} \ar@<1ex>@{^{(}->}[r] \ar@<1ex>@{^{(}.>}[d] \ar@<-1ex>@{^{(}%
->}[d] \ar@<-1ex>@{^{(}.>}[r] & A \ar@<1ex>@{->>}[r] \ar@<-1ex>@{.>>}%
[r] \ar@<1ex>@{^{(}.>}[d] \ar@<-1ex>@{^{(}->}[d] & A^{\prime\prime}
\ar@<1ex>@{^{(}.>}[d] \ar@<-1ex>@{^{(}->}[d] \\
B^{\prime} \ar@<1ex>@{^{(}->}[r] \ar@<-1ex>@{^{(}.>}[r] \ar@<1ex>@{.>>}%
[d] \ar@<-1ex>@{->>}[d] & B \ar@<1ex>@{->>}[r] \ar@<-1ex>@{.>>}[r] \ar@
<1ex>@{.>>}[d] \ar@<-1ex>@{->>}[d] & B^{\prime\prime} \ar@<1ex>@{.>>}%
[d] \ar@<-1ex>@{->>}[d] \\
0 \ar@<1ex>@{^{(}->}[r] \ar@<-1ex>@{^{(}.>}[r] & 0 \ar@<1ex>@{->>}%
[r] \ar@<-1ex>@{.>>}[r] & 0 \\
}%
\]
where the double downward arrows \textquotedblleft$\downdownarrows
$\textquotedblright\ in the first row are (both for Yin and Yang) the maps
$x^{\prime},x,x^{\prime\prime}$. Thus, the commutativity of Diagram
\ref{l424A} settles that the top half of the diagram commutes. The downward
arrows \textquotedblleft$\downdownarrows$\textquotedblright\ in the bottom
half all are zero maps. We obtain the relation%
\[
Row_{1}-Row_{2}+Row_{3}=Col_{1}-Col_{2}+Col_{3}%
\]
by Equation \ref{l_C_Nenashev}. We have $[Row_{3}]=0$ since for the bottom row
both Yin and\ Yang sequence agree, and for the same reason $[Col_{i}]=0$ for
$i=1,2,3$. Thus, it remains $[Row_{1}]=[Row_{2}]$, which is precisely our claim.
\end{proof}

\begin{lemma}
\label{lemma_leftrightswap}We have $\left[
\xymatrix{
X \ar@<1ex>@{^{(}->}[r]^{1} \ar@<-1ex>@{^{(}.>}[r]_{\varphi} & X \ar@
<1ex>@{->>}[r] \ar@<-1ex>@{.>>}[r] & 0
}%
\right]  =\left[
\xymatrix{
0 \ar@<1ex>@{^{(}->}[r] \ar@<-1ex>@{^{(}.>}[r] & X \ar@<1ex>@{->>}%
[r]^{\varphi} \ar@<-1ex>@{.>>}[r]_{1} & X
}%
\right]  $.
\end{lemma}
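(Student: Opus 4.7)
The plan is to apply Nenashev's $3\times 3$-relation (item~(3) of Theorem~\ref{thm_NenashevStylePresentation}) to a single well-chosen diagram that interpolates between the two sides via an auxiliary $X\oplus X$. Concretely, I would work with the $3\times 3$-array of objects
\[
\begin{array}{ccc}
0 & X & X \\
X & X \oplus X & X \\
X & X & 0,
\end{array}
\]
in which Row~1 is the right-hand side double exact sequence of the claim, Row~3 is the left-hand side, and Row~2 is a split sequence $X \hookrightarrow X\oplus X \twoheadrightarrow X$ with Yin equal to Yang (hence $[\mathrm{Row}_2]=0$ by item~(2) of the theorem); similarly I would take Col~2 split with Yin equal to Yang.

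The second step is to solve the commutativity constraints of the four corner $2\times 2$-squares, applied to the Yin and to the Yang sub-diagram separately. A short computation, with compatible choices of the two splittings of $X\oplus X$, pins down the vertical maps in Col~1 and Col~3 as specific expressions in $\varphi$ and $\varphi^{-1}$: they produce Yin and Yang sequences of the same shape as LHS and RHS but possibly with Yin and Yang exchanged and $\varphi$ replaced by $\varphi^{-1}$. Substituting into the Nenashev relation $\mathrm{Row}_1-\mathrm{Row}_2+\mathrm{Row}_3 = \mathrm{Col}_1-\mathrm{Col}_2+\mathrm{Col}_3$, the middle terms drop out and one obtains an identity relating $[\mathrm{LHS}]$, $[\mathrm{RHS}]$, and two outer-column classes. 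I would then identify these outer columns by invoking the auxiliary ``swap identity''--that exchanging Yin and Yang in a Nenashev generator negates its $K_1$-class--which is itself a direct consequence of a simple $3\times 3$ of the same type (or can equivalently be extracted from Lemma~\ref{lemma_DoubeIso} applied to the identity isomorphism together with the vanishing relation~(2)). Combining the two identities gives $[\mathrm{LHS}]=[\mathrm{RHS}]$.

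The main obstacle will be avoiding a tautological relation. The symmetric choice in which Row~2 and Col~2 use the same splitting of $X\oplus X$ rotates the diagram into itself, so that the commutativity constraints force Col~1 to be literally Row~3 and Col~3 to be Row~1, yielding only $[\mathrm{LHS}]+[\mathrm{RHS}]=[\mathrm{LHS}]+[\mathrm{RHS}]$. To break this symmetry one must insert a $\varphi$-twist into one of the intermediate horizontal or vertical maps--for instance by taking Row~2 Yin to be $X\xrightarrow{(\varphi,0)}X\oplus X\xrightarrow{\pi_2}X$ while keeping Col~2 as the standard split $(1,0),\pi_2$--so that $\varphi^{-1}$ is forced into Col~1 and Col~3 by commutativity. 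Verifying that all four corner squares commute for both Yin and Yang while every row and column remains a genuine short exact sequence is the step I expect to require the most care; everything else is routine.
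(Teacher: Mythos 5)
The paper's own proof is a one-line reduction: translate conventions via Remark~\ref{rmk_CompatWithLiterature} and cite Nenashev \cite[Lemma~3.1]{MR1637539}, where exactly this identity $l(\varphi)=\tilde l(\varphi)$ is proved. You instead try to reprove Nenashev's lemma directly from the relations, which is a legitimate (and more self-contained) ambition, but as written there are two concrete gaps. The first is the parenthetical justification of the ``swap identity.'' Lemma~\ref{lemma_DoubeIso} applied with $x'=x=x''=\operatorname{id}$ cannot produce it: commutativity of diagram~(\ref{l424A}) for both the Yin and the Yang subdiagrams then literally forces the Yin (resp.\ Yang) maps of the two double exact sequences to coincide, so the lemma outputs only a tautology. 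For the specific one-sided generators at hand, the swap identity is in fact \emph{equivalent} to $r(\psi)+r(\psi^{-1})=0$, i.e.\ to multiplicativity of the canonical map $\operatorname{Aut}(X)\to K_1(\mathsf{C})_{\operatorname{Nenashev}}$; that in turn needs a Whitehead-lemma computation with elementary matrices, and is not an immediate consequence of relation~(2) plus Lemma~\ref{lemma_DoubeIso}.

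The second gap is that your proposed $\varphi$-twist does not yet break the symmetry the way you want. If Row~2 has Yin $=$ Yang $= \big((\varphi,0),\pi_2\big)$ so that $[\mathrm{Row}_2]=0$, then (with Col~2 the complementary split $(0,1),\pi_1$) the commutativity constraints give $\mathrm{Col}_1 = \big[0\hookrightarrow X\overset{\varphi/1}{\twoheadrightarrow}X\big]$ and $\mathrm{Col}_3 = \big[X\overset{\varphi^{-1}/1}{\hookrightarrow}X\twoheadrightarrow 0\big]$, and the $3\times 3$ relation collapses to $[\mathrm{LHS}]=\big[X\overset{\varphi^{-1}/1}{\hookrightarrow}X\twoheadrightarrow 0\big]$, which is $[\mathrm{LHS}]=[\mathrm{LHS}]$ once one applies Lemma~\ref{lemma_DoubeIso} with $x=\varphi$. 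If instead Row~2 Yin $= (\varphi,0)$ while Yang $= (1,0)$, then $[\mathrm{Row}_2]\neq 0$ and must be evaluated by a separate auxiliary $3\times 3$ (it equals $\big[X\overset{\varphi/1}{\hookrightarrow}X\twoheadrightarrow 0\big]$). That route does close, but only after one also imports $\big[X\overset{\psi/1}{\hookrightarrow}X\twoheadrightarrow 0\big]=\big[X\overset{1/\psi^{-1}}{\hookrightarrow}X\twoheadrightarrow 0\big]$ (a genuine use of Lemma~\ref{lemma_DoubeIso}, but with the \emph{non}-identity iso $x=\psi^{-1}$) and the multiplicativity of $r$ from the Whitehead argument. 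So the plan is salvageable, but the specific reductions you cite are not the right ones, and the remaining work is substantially heavier than ``routine.''
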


\begin{proof}
By Remark \ref{rmk_CompatWithLiterature} the left term agrees with $\tilde
{l}(\varphi)$ in \cite[Equation 2.2]{MR1637539}, and thus with $l(\varphi)$ by
Lemma 3.1, \textit{loc. cit.} Translating back with Remark
\ref{rmk_CompatWithLiterature} yields our claim.
\end{proof}

\subsection{Graphical schematics for double exact
sequences\label{sect_Graphics}}

The notation of Equation \ref{lcimez8} is not very helpful when the involved
objects and morphisms are complicated.

In addition to the standard notation, we shall introduce a slightly different
notation in the present text: Suppose we write the graphical schematic%
\begin{equation}%
\begin{tabular}
[c]{ccccc}%
$A_{1}^{\prime}$ & $\overset{a_{1}^{\prime}}{\hookrightarrow}$ & $A_{1}$ &
$\overset{a_{1}}{\twoheadrightarrow}$ & $A_{1}^{\prime\prime}$\\
$A_{2}^{\prime}$ & $\overset{a_{2}^{\prime}}{\hookrightarrow}$ & $A_{2}$ &
$\overset{a_{2}}{\twoheadrightarrow}$ & $A_{2}^{\prime\prime}$\\
$\vdots$ &  & $\vdots$ &  & $\vdots$\\\hline
$B_{1}^{\prime}$ & $\overset{b_{1}^{\prime}}{\hookrightarrow}$ & $B_{1}$ &
$\overset{b_{1}}{\twoheadrightarrow}$ & $B_{1}^{\prime\prime}$\\
$B_{2}^{\prime}$ & $\overset{b_{2}^{\prime}}{\hookrightarrow}$ & $B_{2}$ &
$\overset{b_{2}}{\twoheadrightarrow}$ & $B_{1}^{\prime\prime}$\\
$\vdots$ &  & $\vdots$ &  & $\vdots$%
\end{tabular}
\ \ \label{l_Schematic}%
\end{equation}
and suppose further

\begin{enumerate}
\item each $A_{i}^{\prime}\hookrightarrow A_{i}\twoheadrightarrow
A_{i}^{\prime\prime}$ is an exact sequence,

\item each $B_{i}^{\prime}\hookrightarrow B_{i}\twoheadrightarrow
B_{i}^{\prime\prime}$ is an exact sequence,

\item in each column (left, middle, right) the objects above and below the
horizontal delimiter are isomorphic by a concrete isomorphism, i.e.%
\begin{align}
I^{\prime}  &  :\bigoplus_{i}A_{i}^{\prime}\overset{\sim}{\longrightarrow
}\bigoplus_{i}B_{i}^{\prime}\label{l_MapsI}\\
I  &  :\bigoplus_{i}A_{i}\overset{\sim}{\longrightarrow}\bigoplus_{i}%
B_{i}\nonumber\\
I^{\prime\prime}  &  :\bigoplus_{i}A_{i}^{\prime\prime}\overset{\sim
}{\longrightarrow}\bigoplus_{i}B_{i}^{\prime\prime}\text{,}\nonumber
\end{align}

\end{enumerate}

then we attach the following double short exact sequence to this datum:%
\[
\kappa=\left[
\xymatrix{
{\bigoplus A_{i}^{\prime}} \ar@<1ex>@{^{(}->}[r]^{p} \ar@<-1ex>@{^{(}%
.>}[r]_{q} & {\bigoplus A_{i}} \ar@<1ex>@{->>}[r]^{r} \ar@<-1ex>@{.>>}[r]_{s}
& {\bigoplus A_{i}^{\prime\prime}}
}%
\right]  \text{,}%
\]
where

\begin{enumerate}
\item $p:=\bigoplus a_{i}^{\prime}$, $r:=\bigoplus a_{i}$,

\item $q:=I^{-1}\circ\left(  \bigoplus b_{i}^{\prime}\right)  \circ I^{\prime
}$, $s:=I^{\prime\prime-1}\circ\left(  \bigoplus b_{i}\right)  \circ I$.
\end{enumerate}

\textit{(Wiring)} We tacitly assume here that $i$ runs through finitely many
values only, i.e. our input graphic schematic will always only have finitely
many rows. We call the datum of $I^{\prime},I,I^{\prime\prime}$ the
\emph{wiring} of the schematic.

We note that since each $A_{i}^{\prime}\hookrightarrow A_{i}\twoheadrightarrow
A_{i}^{\prime\prime}$ is an exact sequence, so is their direct sum, and
analogously for $B$. Thus, $\kappa$ is indeed a double short exact sequence.

\section{The comparison map\label{sect_ExplicitMap}}

Suppose $\mathfrak{A}$ is an arbitrary order in a finite-dimensional
semisimple $\mathbb{Q}$-algebra $A$. In this section we shall set up a
concrete map from the Bass--Swan to the Nenashev presentation.\medskip

We recall: If $P$ denotes a finitely generated right $\mathfrak{A}$-module, we
write $P_{\mathbb{R}}$ to denote the base change $P\otimes_{\mathfrak{A}%
}\mathbb{R}$ (or equivalently $P\otimes_{\mathfrak{A}}A_{\mathbb{R}}$). For
every $P$, we have a canonical short exact sequence%
\[
P\overset{\iota_{P}}{\hookrightarrow}P_{\mathbb{R}}\overset{q_{P}%
}{\twoheadrightarrow}T_{P}\qquad\text{in}\qquad\mathsf{LCA}_{\mathfrak{A}%
}\text{,}%
\]
where $P$ refers to itself, equipped with the discrete topology,
$P_{\mathbb{R}}$ is equipped with the real vector space topology, and $T_{P}$
denotes the quotient taken in $\mathsf{LCA}_{\mathfrak{A}}$. This means that
the underlying topological space of $T_{P}$ is a real torus $\mathbb{T}^{n}$
with $n:=\dim_{\mathbb{R}}(P_{\mathbb{R}})$. The key point is that,
topologically, $P$ is a discrete full rank lattice in $P_{\mathbb{R}}$.

Now consider the generator $[P,\varphi,Q]$ in Bass--Swan's presentation. Then
$\varphi:P_{\mathbb{R}}\overset{\sim}{\rightarrow}Q_{\mathbb{R}}$ is an
isomorphism. Note that alongside%
\begin{equation}
Q\overset{\iota_{Q}}{\hookrightarrow}Q_{\mathbb{R}}\overset{q_{Q}%
}{\twoheadrightarrow}T_{Q}\label{l_siup2}%
\end{equation}
we also get the short exact sequence%
\begin{equation}
Q\overset{\varphi^{-1}\circ\iota_{Q}}{\hookrightarrow}P_{\mathbb{R}}%
\overset{q_{Q}\circ\varphi}{\twoheadrightarrow}T_{Q}\text{.}\label{l_siup2a}%
\end{equation}
It is basically the same, except for that we have replaced the middle object
via the isomorphism $\varphi$. This sequence will henceforth play an important r\^{o}le.

We now make the following crucial definition (and explain some potentially
unclear notation below the definition):

\begin{definition}
Let $(P,\alpha,Q)$ be an object in the category $\mathsf{Sw}(\mathfrak{A}%
,A_{\mathbb{R}})$. We write $\left\langle \left\langle P,\alpha,Q\right\rangle
\right\rangle $ for the following schematic:%
\begin{equation}%
\begin{tabular}
[c]{ccccc}%
$0$ & $\longrightarrow$ & $\bigoplus P$ & $\overset{1}{\longrightarrow}$ &
$\bigoplus P$\\
$P$ & $\longrightarrow$ & $P_{\mathbb{R}}$ & $\longrightarrow$ & $T_{P}$\\
$\prod T_{P}$ & $\overset{1}{\longrightarrow}$ & $\prod T_{P}$ &
$\longrightarrow$ & $0$\\
$Q$ & $\longrightarrow$ & $\bigoplus Q$ & $\overset{s}{\longrightarrow}$ &
$\bigoplus Q$\\
$\prod T_{Q}$ & $\overset{s}{\longrightarrow}$ & $\prod T_{Q}$ &
$\longrightarrow$ & $T_{Q}$\\\hline
$P$ & $\longrightarrow$ & $\bigoplus P$ & $\overset{s}{\longrightarrow}$ &
$\bigoplus P$\\
$\prod T_{P}$ & $\overset{s}{\longrightarrow}$ & $\prod T_{P}$ &
$\longrightarrow$ & $T_{P}$\\
$0$ & $\longrightarrow$ & $\bigoplus Q$ & $\overset{1}{\longrightarrow}$ &
$\bigoplus Q$\\
$Q$ & $\overset{\varphi^{-1}\circ\iota_{Q}}{\longrightarrow}$ & $P_{\mathbb{R}%
}$ & $\overset{q_{Q}\circ\varphi}{\longrightarrow}$ & $T_{Q}$\\
$\prod T_{Q}$ & $\overset{1}{\longrightarrow}$ & $\prod T_{Q}$ &
$\longrightarrow$ & $0$%
\end{tabular}
\ \ \label{l_S1}%
\end{equation}
Whenever convenient, we also use the notation $\left\langle \left\langle
P,\alpha,Q\right\rangle \right\rangle $ for the associated double short exact
sequence (following the recipe of \S \ref{sect_Graphics}).
\end{definition}

We have used some shorthands here, which we explain now:

\begin{enumerate}
\item The symbol $\bigoplus P$ refers to the coproduct indexed over
$\mathbb{Z}_{\geq0}$. By the map \textquotedblleft$s$\textquotedblright\ (as
in `shift') in $P\hookrightarrow\bigoplus P\overset{s}{\twoheadrightarrow
}\bigoplus P$ we mean the map%
\begin{equation}
(p_{0},p_{1},p_{2},\ldots)\mapsto(p_{1},p_{2},\ldots)\text{.}
\label{l_Map_Shift}%
\end{equation}
Analogously, for $Q$.

\item The symbol $\prod T_{P}$ refers to the product indexed over
$\mathbb{Z}_{\geq0}$. By the map \textquotedblleft$s$\textquotedblright%
\ (again as in `shift') in $\prod T_{P}\overset{s}{\hookrightarrow}\prod
T_{P}\twoheadrightarrow T_{P}$ we mean the map
\[
(t_{0},t_{1},t_{2},\ldots)\mapsto(0,t_{0},t_{1},\ldots)\text{.}%
\]

\item We observe that in the left column the objects above the delimiter line
are the same ones as below the line, just with the zero object placed in a
different location. Thus, for the map $I^{\prime}$ in Equation \ref{l_MapsI}
we take the obvious map. The same is true for the right column, so for
$I^{\prime\prime}$ we also take the obvious map. The middle column is a little
more involved since there is also a permutation of the summands involved; but
again it is clear what map we take for $I$.
\end{enumerate}

We note that $\bigoplus P$ carries the discrete topology, and $\prod T_{P}$ is
compact by Tychonoff's theorem. We call both shift maps $s$, although they are
a little different, because the underlying idea of both maps is so similar.

We now have fully defined $\left\langle \left\langle P,\alpha,Q\right\rangle
\right\rangle $. Still, let us describe it a little more. The very same object
can also be drawn as follows:%
\begin{equation}%
{\includegraphics[
height=1.855in,
width=2.4154in
]%
{gfx_1-eps-converted-to.pdf}%
}
\label{l_S2}%
\end{equation}
Here again the three columns correspond to the left, middle and right object
in the double exact sequence. The corresponding left (resp. middle, resp.
right) object in the double exact sequence arises by taking the direct sum of
the left (resp. middle, resp. right) column. The solid arrows correspond to
the Yin arrows, the dotted ones to Yang. This depiction is probably the best
to see the structure of the wiring.

To make it more readable, we have dropped the zero objects in the above
picture and neglected labeling the arrows in the cases where it is clear:
among the $\bigoplus P$ and $\prod T_{P}$, resp. for $Q$, the horizontal exact
sequences stem from the shift maps $s$, while the single arrows correspond to
the identity map. Thus, all in all, $\left\langle \left\langle P,\alpha
,Q\right\rangle \right\rangle $ defines a double exact sequence of the shape%
\[
\left[
\xymatrix{
{\prod T_{P}\oplus P\oplus Q\oplus\prod T_{Q}} \ar@<1ex>@{^{(}->}%
[r] \ar@<-1ex>@{^{(}.>}[r] & {\prod T_{P}\oplus P\oplus P_{\mathbb{R}}\oplus
Q\oplus\prod T_{Q}} \ar@<1ex>@{->>}[r] \ar@<-1ex>@{.>>}[r] & {T_{P}%
\oplus\bigoplus P\oplus\bigoplus Q\oplus T_{Q}}
}%
\right]  \text{,}%
\]
where the morphisms are slightly involved to spell out, and surely easier to
understand from\ Figure \ref{l_S1} or Figure \ref{l_S2} than by trying to
squeeze them into the double exact sequence notation. It would lead to very
heavy arrow labels.

\begin{theorem}
\label{thm_VarThetaWellDefined}Suppose $\mathfrak{A}$ is an arbitrary order in
a finite-dimensional semisimple $\mathbb{Q}$-algebra $A$. The map%
\begin{align*}
\vartheta:K_{0}(\mathfrak{A},\mathbb{R}) &  \longrightarrow K_{1}%
(\mathsf{LCA}_{\mathfrak{A}})_{\operatorname*{Nenashev}}\\
\lbrack P,\varphi,Q] &  \longmapsto\text{\emph{double exact sequence of }%
}\left\langle \left\langle P,\varphi,Q\right\rangle \right\rangle
\end{align*}
is well-defined.
\end{theorem}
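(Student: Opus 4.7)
The proof has to verify: (i) for each object $(P,\varphi,Q)$ of $\mathsf{Sw}(\mathfrak{A},A_{\mathbb{R}})$ the schematic $\langle\langle P,\varphi,Q\rangle\rangle$ really assembles into a double short exact sequence in $\mathsf{LCA}_{\mathfrak{A}}$; (ii) Bass--Swan Relation A, equation~\eqref{l_Swan_RelationA}, is respected; (iii) Bass--Swan Relation B, equation~\eqref{l_Swan_RelationB}, is respected.

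Step (i) is the easy bookkeeping. I would check row-by-row that each of the ten rows of \eqref{l_S1} is a short exact sequence in $\mathsf{LCA}_{\mathfrak{A}}$: the identity/zero rows are trivial, the rows through $P_{\mathbb{R}}$ are the standard short exact sequence and the $\varphi$-twisted variant already recorded in \eqref{l_siup2a}, and the shift rows $P\hookrightarrow\bigoplus P\xrightarrow{s}\bigoplus P$ and $\prod T_{P}\xrightarrow{s}\prod T_{P}\twoheadrightarrow T_{P}$ are the two usual Eilenberg-swindle sequences, which are in $\mathsf{LCA}_{\mathfrak{A}}$ because $\bigoplus P$ is discrete and $\prod T_{P}$ is compact by Tychonoff. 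Since the wiring maps $I',I,I''$ are just explicit block permutations of summands, the recipe of \S\ref{sect_Graphics} then assembles these rows into a bona fide double short exact sequence.

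Step (ii) will be carried out by a Nenashev $(3\times 3)$-diagram argument. Given a composable pair as in \eqref{lcimez21a}, the induced short exact sequences $P'\hookrightarrow P\twoheadrightarrow P''$ and $Q'\hookrightarrow Q\twoheadrightarrow Q''$ produce a short exact sequence in every summand functor appearing in \eqref{l_S1}: for $M\mapsto M_{\mathbb{R}}$ by flatness of $\mathbb{R}/\mathbb{Q}$, for $M\mapsto T_{M}$ by the snake lemma applied to $M\hookrightarrow M_{\mathbb{R}}\twoheadrightarrow T_{M}$, and for $M\mapsto\bigoplus M$ and $M\mapsto\prod T_{M}$ by exactness of (co)products in $\mathsf{LCA}_{\mathfrak{A}}$. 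I would then arrange $\langle\langle P',\alpha',Q'\rangle\rangle$, $\langle\langle P,\alpha,Q\rangle\rangle$, $\langle\langle P'',\alpha'',Q''\rangle\rangle$ as the three rows of a $(3\times 3)$-diagram of double exact sequences, with columns the componentwise induced short exact sequences declared as double exact sequences with Yin equal to Yang. Commutativity of the Yin-only and Yang-only halves is a consequence of naturality, including for the twisted row in \eqref{l_siup2a}, where the compatibility $\alpha\circ p_{\mathbb{R}}=q_{\mathbb{R}}\circ\alpha'$ etc.\ built into morphisms of $\mathsf{Sw}$ is exactly what is needed. Since every column has Yin equal to Yang, \eqref{l_C2_NenaAgreeIsZero} makes its class vanish, and Nenashev's relation \eqref{l_C_Nenashev} then delivers Relation A.

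Step (iii) is the main obstacle. The three schematics $\langle\langle P,\alpha,Q\rangle\rangle$, $\langle\langle Q,\beta,S\rangle\rangle$, $\langle\langle P,\beta\circ\alpha,S\rangle\rangle$ have genuinely different middle objects ($P_{\mathbb{R}}$, $Q_{\mathbb{R}}$, $P_{\mathbb{R}}$), and $\alpha$, $\beta$, $\beta\circ\alpha$ enter the twisted rows asymmetrically. My plan is: first, apply Lemma~\ref{lemma_DoubeIso} to $\langle\langle Q,\beta,S\rangle\rangle$ with the block-diagonal isomorphism that is the identity on every summand except $Q_{\mathbb{R}}\xrightarrow{\alpha^{-1}}P_{\mathbb{R}}$ in the middle column, to realign its middle real summand to $P_{\mathbb{R}}$ (the twisted $S$-row then features $(\beta\circ\alpha)^{-1}\circ\iota_{S}$ and $q_{S}\circ(\beta\circ\alpha)$) without changing the Nenashev class. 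Second, construct an explicit $(3\times 3)$-diagram of double exact sequences whose rows are, after this realignment, $\langle\langle P,\alpha,Q\rangle\rangle$, $\langle\langle P,\beta\circ\alpha,S\rangle\rangle$, $\langle\langle Q,\beta,S\rangle\rangle$ (or an equivalent signed arrangement) and whose columns are either Yin-equals-Yang or reduce by Lemma~\ref{lemma_leftrightswap} and the shift rows to vanishing classes. The delicate part, and the real work, is choreographing the permutation in the middle wiring $I$ together with the shifts $s$ so that the twisted rows match up correctly when the Nenashev relation \eqref{l_C_Nenashev} is applied.
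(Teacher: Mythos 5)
Your Steps (i) and (ii) are essentially the paper's argument: the row-by-row exactness check is the same bookkeeping, and for Relation A the paper (Lemma~\ref{sv3}) also builds a $(3\times3)$-diagram with rows $\langle\langle P',\alpha',Q'\rangle\rangle$, $\langle\langle P,\alpha,Q\rangle\rangle$, $\langle\langle P'',\alpha'',Q''\rangle\rangle$, columns induced summand-wise with Yin equal to Yang, and with the only delicate point being the twisted row, which commutes precisely because of the defining square of a morphism in $\mathsf{Sw}$.

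Step (iii) has a genuine gap. The $(3\times3)$-diagram you propose cannot exist: the left (resp.\ middle, right) objects of the three double exact sequences are $\prod T_P\oplus P\oplus Q\oplus\prod T_Q$, $\prod T_P\oplus P\oplus S\oplus\prod T_S$, and (after your realignment) $\prod T_Q\oplus Q\oplus S\oplus\prod T_S$; there is no short exact sequence among these in either ordering, since for instance $Q$ does not embed in $S$, so the columns of your alleged diagram are not exact. The paper instead stacks $\langle\langle P,\alpha,Q\rangle\rangle$ and $\langle\langle Q,\psi,R\rangle\rangle$ into a single big schematic $J$ and uses the \emph{split exact} columns to get $[\langle\langle P,\alpha,Q\rangle\rangle]-[J]+[\langle\langle Q,\psi,R\rangle\rangle]=0$; it then transforms $J$ through a chain of isomorphisms of double exact sequences (each step another $(3\times3)$-diagram quotienting by an isomorphism), successively killing the doubled $T_Q,\prod T_Q,Q,\bigoplus Q$ and $Q_{\mathbb R}$ summands until $\langle\langle P,\psi\alpha,R\rangle\rangle$ remains. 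Crucially, several of these steps swap two copies of the same object on the Yang side only, and the resulting column contributions vanish only thanks to Lemma~\ref{Lemma_SwapIsZeroInLCAA}, a nontrivial and specifically $\mathsf{LCA}_{\mathfrak{A}}$-dependent fact (it is false for $K_1$ of $\mathbb R$, cf.\ Remark~\ref{rmk_InversionIsTrivial}). You have not identified this ingredient; you invoke Lemma~\ref{lemma_leftrightswap} instead, which is an identity between two presentations of the same class and produces no vanishing. Even the warm-up $[P,\operatorname{id},P]\mapsto 0$ (Lemma~\ref{sv1}) already needs the swap-vanishing lemma, so without it your Step (iii) cannot close.
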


We shall split the slightly involved proof into several little lemmata.

\begin{lemma}
\label{sv3}The map $\vartheta$ respects Relation A (Equation
\ref{l_Swan_RelationA}), i.e. for morphisms $a,b$ composable as%
\[
(P^{\prime},\alpha^{\prime},Q^{\prime})\overset{a}{\longrightarrow}%
(P,\alpha,Q)\overset{b}{\longrightarrow}(P^{\prime\prime},\alpha^{\prime
\prime},Q^{\prime\prime})
\]
such that the induced composable arrows $P^{\prime}\hookrightarrow
P\twoheadrightarrow P^{\prime\prime}$ and $Q^{\prime}\hookrightarrow
Q\twoheadrightarrow Q^{\prime\prime}$ are exact sequences of right $R^{\prime
}$-modules, we have%
\begin{equation}
\lbrack\left\langle \left\langle P,\alpha,Q\right\rangle \right\rangle
]=[\left\langle \left\langle P^{\prime},\alpha^{\prime},Q^{\prime
}\right\rangle \right\rangle ]+[\left\langle \left\langle P^{\prime\prime
},\alpha^{\prime\prime},Q^{\prime\prime}\right\rangle \right\rangle
]\text{.}\label{lmiur11}%
\end{equation}

\end{lemma}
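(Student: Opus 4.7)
The plan is to deduce Equation \ref{lmiur11} from a single application of Nenashev's $(3\times 3)$-relation. I would arrange $\langle\langle P',\alpha',Q'\rangle\rangle$, $\langle\langle P,\alpha,Q\rangle\rangle$, $\langle\langle P'',\alpha'',Q''\rangle\rangle$ as the three rows of a big $(3\times 3)$-diagram of double exact sequences, and take the three columns to be the evident direct sums of the morphisms $P'\hookrightarrow P\twoheadrightarrow P''$, $Q'\hookrightarrow Q\twoheadrightarrow Q''$, $P'_{\mathbb{R}}\hookrightarrow P_{\mathbb{R}}\twoheadrightarrow P''_{\mathbb{R}}$, $T_{P'}\hookrightarrow T_P\twoheadrightarrow T_{P''}$ together with their $\prod$- and $\bigoplus$-versions, grouped according to the summand structure visible in Figure \ref{l_S2}. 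None of these column morphisms involve $\alpha$, so I would take the Yin and Yang arrows of each column to be identical. Each column then has class zero by Equation \ref{l_C2_NenaAgreeIsZero}, and Nenashev's relation (Equation \ref{l_C_Nenashev}) reduces to $Row_1 - Row_2 + Row_3 = 0$, which is precisely Equation \ref{lmiur11}.

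First I would verify that the columns are genuine exact sequences in $\mathsf{LCA}_{\mathfrak{A}}$. Since $P''$ and $Q''$ are projective $\mathfrak{A}$-modules, the input sequences split as sequences of $\mathfrak{A}$-modules; hence applying the additive functors $-\otimes_{\mathfrak{A}}\mathbb{R}$, $T_{\bullet}$, and countable $\bigoplus$ all preserve exactness, and the countable product of tori preserves strict surjections because the $T_\bullet$ are compact Hausdorff. Summing the appropriate rows of the schematic in Equation \ref{l_S1} then produces exact sequences for the left, middle and right columns of the big diagram.

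The main computational step is the commutativity of the whole diagram after deleting all Yin arrows, and symmetrically after deleting all Yang arrows. Every row-arrow of Figure \ref{l_S1} is either a shift $s$, an identity, a zero, or one of $\iota_{\bullet}, q_{\bullet}$, except for the two Yang arrows $\alpha^{-1}\circ\iota_Q$ and $q_Q\circ\alpha$ in the ninth row. Naturality of the shifts, of the unit $\iota_{\bullet}$, and of the cokernel projection $q_{\bullet}$ handles all of the former. For the latter, the fact that $(p,q)$ and $(p',q')$ are morphisms in $\mathsf{Sw}(\mathfrak{A},A_{\mathbb{R}})$ gives $\alpha\circ p_{\mathbb{R}} = q_{\mathbb{R}}\circ\alpha'$; combined with $\iota_Q\circ q = q_{\mathbb{R}}\circ\iota_{Q'}$ and $q_Q\circ q_{\mathbb{R}} = T_q\circ q_{Q'}$, the two Yang squares close, and analogously for $b$.

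The only genuine obstacle, which I expect to be purely bookkeeping, is ensuring that the wirings $I', I, I''$ of \S\ref{sect_Graphics} are compatible across the three rows. Since the wirings for $\langle\langle P,\alpha,Q\rangle\rangle$ are the \emph{same} canonical permutation of summands regardless of the input $(P,\alpha,Q)$ — only the sizes of the summands change — they commute strictly with the diagonal column maps; once this is observed no further verification is needed and the $(3\times 3)$-relation yields Equation \ref{lmiur11}.
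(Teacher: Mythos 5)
Your proposal is correct and follows essentially the same route as the paper's proof: the same $(3\times 3)$-diagram with $\langle\langle P',\alpha',Q'\rangle\rangle$, $\langle\langle P,\alpha,Q\rangle\rangle$, $\langle\langle P'',\alpha'',Q''\rangle\rangle$ as rows, columns built from the functorially-induced sequences with identical Yin and Yang arrows (hence vanishing by Equation \ref{l_C2_NenaAgreeIsZero}), and the one nontrivial verification being the Yang-side commutativity around the ``$\alpha^{-1}\circ\iota_Q$, $q_Q\circ\alpha$'' row, which you close using $\alpha\circ p_{\mathbb{R}}=q_{\mathbb{R}}\circ\alpha'$ together with naturality of $\iota$ and $q$, exactly as the paper does via the commuting square of Equation \ref{lmiur10} and the ``middle cross'' discussion around Figure \ref{l_siup1}.
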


\begin{proof}
First of all, we note that the input datum gives rise to a commutative diagram
of the shape%
\begin{equation}%
\xymatrix{
P^{\prime}_{\mathbb{R} } \ar@{^{(}->}[r] \ar[d]^{\alpha^{\prime} }
& P_{\mathbb{R} } \ar@{->>}[r] \ar[d]^{\alpha} & P^{\prime\prime}_{\mathbb{R}
} \ar[d]^{\alpha^{\prime\prime} } \\
Q^{\prime}_{\mathbb{R} } \ar@{^{(}->}[r] & Q_{\mathbb{R} } \ar@{->>}%
[r] & Q^{\prime\prime}_{\mathbb{R} } \text{.} \\
}
\label{lmiur10}%
\end{equation}
We note that the exactness of $P^{\prime}\hookrightarrow P\twoheadrightarrow
P^{\prime\prime}$ also gives natural exact sequences
\begin{equation}
\bigoplus P^{\prime}\hookrightarrow\bigoplus P\twoheadrightarrow\bigoplus
P^{\prime\prime}\text{,}\qquad\text{and}\qquad\prod T_{P^{\prime}%
}\hookrightarrow\prod T_{P}\twoheadrightarrow\prod T_{P^{\prime\prime}%
}\text{.} \label{lmiur10_W}%
\end{equation}
The same is true for $Q$, $Q_{\mathbb{R}}$, $\bigoplus Q$, $\prod T_{Q}$. This
means that we get short exact sequences for all the entries in $\left\langle
\left\langle P,\alpha,Q\right\rangle \right\rangle $, having their
counterparts in $\left\langle \left\langle P^{\prime},\alpha^{\prime
},Q^{\prime}\right\rangle \right\rangle $ and $\left\langle \left\langle
P^{\prime\prime},\alpha^{\prime\prime},Q^{\prime\prime}\right\rangle
\right\rangle $ on the left resp. right side. We thus can set up a
$(3\times3)$-diagram%
\[%
\begin{array}
[c]{c}%
\left\langle \left\langle P^{\prime},\alpha^{\prime},Q^{\prime}\right\rangle
\right\rangle \\
\downdownarrows\\
\left\langle \left\langle P,\alpha,Q\right\rangle \right\rangle \\
\downdownarrows\\
\left\langle \left\langle P^{\prime\prime},\alpha^{\prime\prime}%
,Q^{\prime\prime}\right\rangle \right\rangle
\end{array}
\]
as follows: The columns are induced for both Yin and Yang arrows from the
sequences above, e.g., in Equation \ref{lmiur10_W}, and correspondingly for
all other entries in the schematic. On most terms in the schematic it is
obvious that the resulting squares commute. The only potentially delicate
piece is the middle cross for all three terms of the exact sequences (by
middle cross we mean the four diagonal arrows in Figure \ref{l_S2}). We just
discuss the monic piece around the `$\hookrightarrow$' in detail; the epic
piece can be treated analogously. We depict the situation below. On the left
we see the middle cross of $\left\langle \left\langle P^{\prime}%
,\alpha^{\prime},Q^{\prime}\right\rangle \right\rangle $, and on the right the
middle cross of $\left\langle \left\langle P,\alpha,Q\right\rangle
\right\rangle $. Let us describe the compatible maps from left to right which
then form the top downward arrows \textquotedblleft$\downdownarrows
$\textquotedblright\ in the above $(3\times3)$-diagram.
\begin{equation}%
{\includegraphics[
height=1.8351in,
width=4.7651in
]%
{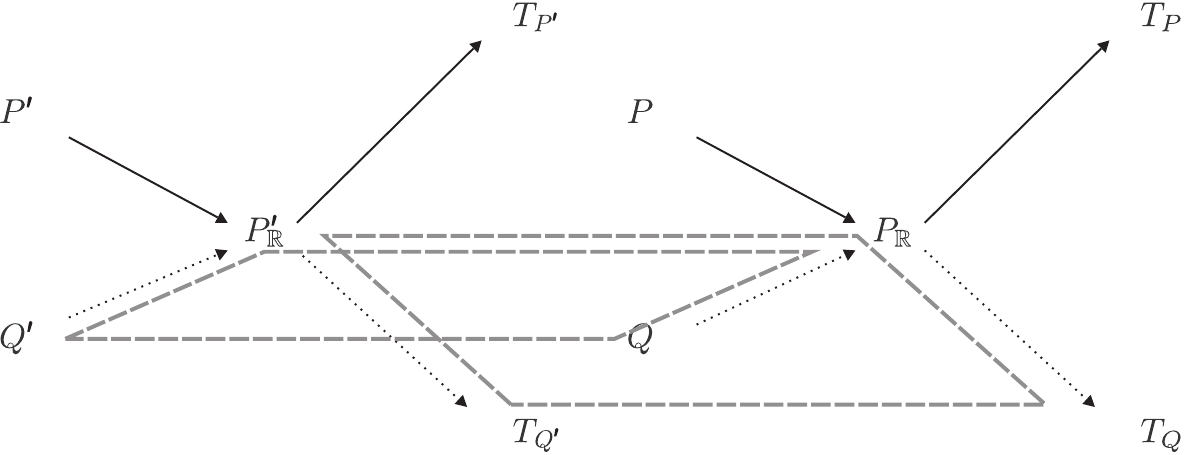}%
}
\label{l_siup1}%
\end{equation}
On the Yin side (the solid arrows), we just use the natural maps coming from
Equation \ref{l_siup2}, e.g.,%
\[
P^{\prime}\hookrightarrow P_{\mathbb{R}}^{\prime}\twoheadrightarrow
T_{P^{\prime}}%
\]
on the left. On the Yang side, we could do the same if we had $Q_{\mathbb{R}}$
sitting in the middle, since we have a corresponding natural sequence%
\[
Q^{\prime}\hookrightarrow Q_{\mathbb{R}}^{\prime}\twoheadrightarrow
T_{Q^{\prime}}\text{.}%
\]
Thus, we play the trick of Equation \ref{l_siup2a}, that is: We pre- and
post-compose $Q_{\mathbb{R}}^{\prime}$ with the isomorphisms $\alpha^{\prime}%
$, $\alpha^{\prime-1}$; and correspondingly for $Q_{\mathbb{R}}$ with
$\alpha,\alpha^{-1}$. The necessary diagram of commutativities to check that
this compatibly fills the squares in Figure \ref{l_siup1} is
\textit{precisely} the diagram in Equation \ref{lmiur10}. In the figure above,
the front dashed contour parallelogram (the one more to the right) corresponds
precisely to the left commuting square in Equation \ref{lmiur10}, composed
with the natural maps $q_{Q^{\prime}}$ resp. $q_{Q}$. The back dashed contour
parallelogram (the one more to the left) corresponds to the same square, but
for the inverse morphisms $\alpha^{\prime-1}$ and $\alpha^{-1}$ instead, this
time pre-composed with the natural maps $\iota_{Q^{\prime}}$ resp. $\iota_{Q}%
$. Since in the columns we use the same maps for the Yin and Yang side, in
Nenashev's relation of the $(3\times3)$-diagram we have no column
contributions, and the remaining relation among the rows in precisely Equation
\ref{lmiur11}, proving our claim.
\end{proof}

\begin{lemma}
\label{Lemma_SwapIsZeroInLCAA}Let $X\in\mathsf{LCA}_{\mathfrak{A}}$ be
arbitrary. In $K_{1}(\mathsf{LCA}_{\mathfrak{A}})_{\operatorname*{Nenashev}}$
the class of%
\begin{equation}
s_{X}:=\left[
\xymatrix{
X \oplus X \ar@<1ex>@{^{(}->}[r]^{1} \ar@<-1ex>@{^{(}.>}[r]_{q} & X \oplus
X \ar@<1ex>@{->>}[r] \ar@<-1ex>@{.>>}[r] & 0
}%
\right]  \text{,} \label{lmiur2}%
\end{equation}
where $q$ swaps both summands, i.e. $(x,y)\mapsto(y,x)$, is zero. The same is
true if we additionally impose a sign switch, i.e. use $(x,y)\mapsto(y,-x)$ instead.
\end{lemma}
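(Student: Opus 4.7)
The plan is to reduce the statement to showing that the class $[-1_X]$ of the automorphism $-1\in\operatorname{Aut}(X)$ vanishes in $K_{1}(\mathsf{LCA}_{\mathfrak{A}})_{\operatorname*{Nenashev}}$. First I would apply Lemma \ref{lemma_leftrightswap} with $\varphi=q$ to rewrite $s_X$ as the double exact sequence with trivial subobject and Yin-surjection $q$, Yang-surjection $1$; by Remark \ref{rmk_CompatWithLiterature} this is exactly the image of the swap $q\in\operatorname{Aut}(X\oplus X)$ under the canonical map $\operatorname{Aut}(X\oplus X)\to K_{1}(\mathsf{LCA}_{\mathfrak{A}})$.

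The heart of the argument is a single $(3\times 3)$-diagram built on the diagonal/codiagonal sequence $X\overset{\Delta}{\hookrightarrow}X\oplus X\overset{\nabla}{\twoheadrightarrow}X$ with $\Delta(x)=(x,x)$ and $\nabla(x,y)=x-y$. I would take this as both the top and middle rows, with $\mathrm{Yin}=\mathrm{Yang}$ (so each contributes class zero), and $0\to 0\to 0$ as the bottom row. The three columns are then double exact sequences of the form $0\hookrightarrow Z\twoheadrightarrow Z$: the left column has identity Yin and Yang (class zero), the middle column has Yin-surjection $1$ and Yang-surjection $q$ (giving precisely $s_X$), and the right column has Yin-surjection $1$ and Yang-surjection $-1$ (giving $[-1_X]$ after applying Lemma \ref{lemma_leftrightswap}). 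The point is that $q$ genuinely preserves the $\Delta/\nabla$-sequence---restricting to $1_X$ on $\Delta(X)$ and descending to $-1_X$ on the quotient, as $\nabla\circ q=-\nabla$---so both the Yin-only and Yang-only diagrams commute. Nenashev's $(3\times 3)$-relation then yields $s_X=[-1_X]$. For the sign-switched variant $q_s(x,y)=(y,-x)$, the factorization $q_s=(1_X\oplus-1_X)\circ q$ combined with additivity of $\operatorname{Aut}(X\oplus X)\to K_1$ gives $[q_s]=[-1_X]+[q]=2[-1_X]$. Both halves of the lemma therefore reduce to showing $[-1_X]=0$.

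To finish, I would establish $[-1_X]=0$ in $K_{1}(\mathsf{LCA}_{\mathfrak{A}})$ for every $X\in\mathsf{LCA}_{\mathfrak{A}}$ via an Eilenberg-swindle argument internal to the LCA category: after splitting $X$ via the LCA structure theorem into vector-space, discrete, and compact pieces, one forms $\bigoplus_{n\geq 0}X$ (discrete case) or $\prod_{n\geq 0}X$ (compact case), both of which remain in $\mathsf{LCA}_{\mathfrak{A}}$, and uses the shift identification $X^{\infty}\cong X\oplus X^{\infty}$ together with additivity on the shift short exact sequence to obtain $[-1_{X^{\infty}}]=[-1_X]+[-1_{X^{\infty}}]$, forcing $[-1_X]=0$. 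Vector-space pieces are handled separately using the rotation path from $I$ to $-I$ on $V\oplus V$.

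The main obstacle is precisely this last step. In ordinary $K_{1}(R)$ the class $[-1_R]$ is typically nontrivial 2-torsion (for instance $K_{1}(\mathbb{Z})=\{\pm 1\}$), so the vanishing $[-1_X]=0$ genuinely uses the LCA structure, and making it clean for arbitrary $X$ essentially requires a case-by-case treatment via the structure theorem rather than one uniform $(3\times 3)$-diagram. The diagonal/codiagonal $(3\times 3)$-diagram of the middle step is by contrast a routine verification of Yin/Yang commutativity.
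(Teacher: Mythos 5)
Your reduction of $[s_X]$ to $[-1_X]$ via the diagonal/codiagonal $(3\times3)$-diagram is correct and is a clean in-category alternative to the elementary-matrix observation the paper makes in Remark \ref{rmk_InversionIsTrivial}; the Eilenberg swindles for discrete and compact pieces are also sound, once you replace the claimed direct-sum splitting of $X$ (which does not exist in general) by the exact sequence $C\hookrightarrow X\twoheadrightarrow V\oplus D$ of \cite[Lemma 6.5]{etnclca} and the corresponding $(3\times3)$-additivity of compatible automorphisms, $[-1_X]=[-1_C]+[-1_{V\oplus D}]$.

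The genuine gap is the vector-module case. A rotation path from $I$ to $-I$ in $\operatorname{GL}(V\oplus V)$ — or, algebraically, writing $-1_{V\oplus V}$ as a product of elementary matrices, which is possible precisely because $\det(-1_{V\oplus V})=1$ — only gives $[-1_{V\oplus V}]=0$, i.e. $2[-1_V]=0$. It does not give $[-1_V]=0$, and the quantity you are trying to kill is exactly a potential $2$-torsion class, so the dimension-doubling trick is powerless here. (For $V$ of odd real dimension $-1_V$ has determinant $-1$, and there is no path in $\operatorname{GL}(V)$ from $1_V$ to $-1_V$ at all.) The vanishing in fact uses the $\mathsf{LCA}_{\mathfrak{A}}$ structure in an essential way: as in Step 1 of the paper's proof, write $V=\mathfrak{X}\otimes_{\mathbb{Z}}\mathbb{R}$ for a full-rank $\mathfrak{A}$-lattice $\mathfrak{X}\subset V$; then $-1_V$ is the image of $-1_{\mathfrak{X}}$ under the exact functors $\operatorname*{PMod}(\mathfrak{A})\to\operatorname*{PMod}(A_{\mathbb{R}})\to\mathsf{LCA}_{\mathfrak{A}}$, and the composite $K_1(\mathfrak{A})\to K_1(A_{\mathbb{R}})\to K_1(\mathsf{LCA}_{\mathfrak{A}})$ is zero (\cite[Example 2.7]{etnclca}). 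With this replacement your route closes and runs essentially parallel to the paper's, which swindles $[s_X]$ directly rather than first passing through $[-1_X]$: both ultimately rest on the same three facts — vanishing of $K_1$ of the compact subcategory, of the discrete subcategory, and of the image of $K_1(\mathfrak{A})$.
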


\begin{proof}
We deal with the case without the sign: (Step 1)\ Suppose $X$ is a vector
$\mathfrak{A}$-module. Then it can be written as $X=\mathfrak{X}%
\otimes_{\mathbb{Z}}\mathbb{R}$, where $\mathfrak{X}$ is a discrete right
$\mathfrak{A}$-module and full $\mathbb{Z}$-rank lattice in $X$. We can also
consider the swap for $\mathfrak{X}\oplus\mathfrak{X}\rightarrow
\mathfrak{X}\oplus\mathfrak{X}$ in $\operatorname*{PMod}(\mathfrak{A})$. Note
that we can send this swapping automorphism along the exact functors%
\[
\operatorname*{PMod}(\mathfrak{A})\longrightarrow\operatorname*{PMod}%
(A_{\mathbb{R}})\longrightarrow\mathsf{LCA}_{\mathfrak{A}}\text{,}%
\]
which are $(-)\mapsto(-)\otimes_{\mathbb{Z}}\mathbb{R}$, and then interpreting
the vector space as its underlying locally compact right $\mathfrak{A}$-module
with the real topology. The resulting automorphism in $\mathsf{LCA}%
_{\mathfrak{A}}$ is precisely the one of our claim. However, the composition
$K_{1}(\mathfrak{A})\rightarrow K_{1}(A_{\mathbb{R}})\rightarrow
K_{1}(\mathsf{LCA}_{\mathfrak{A}})$ is zero, which is seen in terms of the
Nenashev presentation in \cite[Example 2.7]{etnclca}. At least if
$\mathfrak{A}$ is regular, it alternatively follows from \cite[Theorem
11.2]{etnclca}, which avoids working in the Nenashev presentation. The proof
given loc. cit. would also extend to arbitrary orders $\mathfrak{A}$. (Step
2)\ Now let $X\in\mathsf{LCA}_{\mathfrak{A}}$ be arbitrary. By \cite[Lemma
6.5]{etnclca} we can find an exact sequence%
\begin{equation}
C\hookrightarrow X\twoheadrightarrow V\oplus D \label{lmiur1}%
\end{equation}
in $\mathsf{LCA}_{\mathfrak{A}}$ with $C$ compact, $D$ discrete and $V$ a
vector $\mathfrak{A}$-module. Consider the double exact sequence%
\[
S=\left[
\xymatrix{
C \oplus C \ar@<1ex>@{^{(}->}[r] \ar@<-1ex>@{^{(}.>}[r] & X \oplus
X \ar@<1ex>@{->>}[r] \ar@<-1ex>@{.>>}[r] & (V \oplus D) \oplus(V \oplus D)
}%
\right]
\]
which arises as the direct sum of two copies of Equation \ref{lmiur1}.
Consider the $(3\times3)$-diagram%
\[%
\begin{array}
[c]{c}%
S\\
\downdownarrows\\
S\\
\downdownarrows\\
0\text{,}%
\end{array}
\]
where the top downward arrows \textquotedblleft$\downdownarrows$%
\textquotedblright\ are the identity on the Yin side, and\ are swapping the
respective two summands on the Yang side. The resulting relation%
\[
Row_{1}-Row_{2}+Row_{3}=Col_{1}-Col_{2}+Col_{3}%
\]
simplifies to $[S]-[S]+[0]=[s_{C}]-[s_{X}]+[s_{V\oplus D}]$, where $s_{(-)}$
denotes the class of Equation \ref{lmiur2} for $C$, $X$ and $V\oplus D$
(instead of $X$) respectively. Similarly we obtain $[s_{V\oplus D}%
]=[s_{V}]+[s_{D}]$. We have $[s_{V}]=0$ by Step 1. The classes of $[s_{C}]$
and $[s_{D}]$ can be seen to be zero by an Eilenberg swindle, see again the
method of \cite[Example 2.7]{etnclca}. Alternatively: For the compact object
$C$, we may regard $C$ in $\mathsf{LCA}_{\mathfrak{A},C}$ and consider the
class of $s_{C}$ there. Thus, our $[s_{C}]$ is the image in $K_{1}%
(\mathsf{LCA}_{\mathfrak{A}})$ under the exact functor $\mathsf{LCA}%
_{\mathfrak{A},C}\rightarrow\mathsf{LCA}_{\mathfrak{A}}$. Since $K_{1}%
(\mathsf{LCA}_{\mathfrak{A},C})=0$ by the Eilenberg swindle (\cite[Lemma
4.2]{obloc}; the category $\mathsf{LCA}_{\mathfrak{A},C}$ is closed under
infinite products by Tychonoff's Theorem), it follows that $[s_{C}]=0$.
Analogously, we obtain $[s_{D}]=0$ by using the corresponding functor from
discrete $\mathfrak{A}$-modules, an exact category which is closed under
infinite coproducts. Combining these equations, $[s_{X}]=0$, which is what we
wanted to show. For the signed swap, the same argument works; however, the
signed swap is in fact trivial in $K$-theory generally, see \cite[Lemma 3.2,
(i)]{MR1637539} (this is valid, recall how to translate notation, Remark
\ref{rmk_CompatWithLiterature}).
\end{proof}

\begin{remark}
\label{rmk_InversionIsTrivial}The above lemma might sound counter-intuitive,
especially if $X$ is a vector $\mathfrak{A}$-module. Its statement would be
false in $K_{1}(\mathbb{R})$. What it really says is that $K$-theory does not
see orientation-reversal in $\mathsf{LCA}_{\mathfrak{A}}$. To see this, note
that in the above proof, the swapping class, once viewed in $K_{1}%
(\mathfrak{A})$, agrees via%
\[%
\begin{pmatrix}
& 1\\
1 &
\end{pmatrix}
=%
\begin{pmatrix}
1 & 1\\
& 1
\end{pmatrix}%
\begin{pmatrix}
1 & \\
-1 & 1
\end{pmatrix}%
\begin{pmatrix}
1 & 1\\
& 1
\end{pmatrix}%
\begin{pmatrix}
-1 & \\
& 1
\end{pmatrix}
\]
with the class of multiplication by $-1$. Note that the first three matrices
are elementary and thus zero in $K_{1}$. This observation extends the
invisibility of orientations to the Haar measure as discussed in
\cite[\S 2]{etnclca}, and when $\mathfrak{A}=\mathbb{Z}$, it is literally that.
\end{remark}

Relation B (Equation \ref{l_Swan_RelationB}) is a little more complicated to
handle. It is already not completely obvious that $[P,\operatorname*{id},P]$
is being sent to zero. Hence, as a warm-up and illustration of the general
method, let us prove this first.

\begin{lemma}
\label{sv1}The map $\vartheta$ sends $[P,\operatorname*{id},P]$ to zero.
\end{lemma}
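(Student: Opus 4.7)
The plan is to substitute $Q=P$ and $\varphi=\operatorname{id}$ into the schematic defining $\langle\langle P,\varphi,Q\rangle\rangle$ and examine the five rows above the delimiter line (the Yin block) against the five rows below it (the Yang block). With $\varphi=\operatorname{id}$, the Yang row
\[
Q \overset{\varphi^{-1}\circ\iota_Q}{\longrightarrow} P_{\mathbb{R}} \overset{q_Q\circ\varphi}{\longrightarrow} T_Q
\]
collapses to $P \overset{\iota_P}{\to} P_{\mathbb{R}} \overset{q_P}{\to} T_P$, which is literally the Yin row 2. A term-by-term comparison then shows that Yin row 1 = Yang row 3, Yin row 2 = Yang row 4, Yin row 3 = Yang row 5, Yin row 4 = Yang row 1, and Yin row 5 = Yang row 2. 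So the two blocks consist of the same five short exact sequences, only listed in a different order.

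Next, I would invoke the wiring prescription from \S\ref{sect_Graphics}: one must specify isomorphisms $I',I,I''$ between the direct sums of the column entries above and below the delimiter. Since each column already contains the same multiset of summands on both sides of the delimiter, the natural choice of wiring is the permutation that matches each Yin summand with the Yang summand coming from the \emph{same} short exact sequence. With this choice of $I',I,I''$, the Yin and Yang short exact sequences produced by the recipe in \S\ref{sect_Graphics} literally agree. Hence by clause (2) of Theorem \ref{thm_NenashevStylePresentation} the class vanishes in $K_1(\mathsf{LCA}_{\mathfrak{A}})_{\operatorname*{Nenashev}}$.

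The main obstacle is bookkeeping: since the Yin (and Yang) columns contain repeated summands ($P$ twice, $\prod T_P$ twice in the left column, and similarly in the other columns), a careless choice of wiring could pair the two $P$'s (or the two $\bigoplus P$'s, etc.) the wrong way, leaving a residual transposition between Yin and Yang. In that case one does not immediately obtain Yin $=$ Yang, but rather Yin $=$ Yang composed with a swap automorphism of the form appearing in Lemma \ref{Lemma_SwapIsZeroInLCAA}. Applying that lemma (together with Lemma \ref{lemma_DoubeIso} to propagate the isomorphism through the double exact sequence) eliminates the discrepancy and recovers the conclusion, so the argument is robust to the ambiguity in the "obvious" wiring.
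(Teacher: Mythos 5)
Your observation that after substituting $Q=P$ and $\varphi=\operatorname{id}$ the Yin rows and the Yang rows coincide as a multiset of short exact sequences is correct, and your instinct that the residual issue is a swap to be killed by Lemma \ref{Lemma_SwapIsZeroInLCAA} is also right. But both branches of the proof as written have a gap.

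In plan A you treat the wiring $I',I,I''$ as something you may choose to match corresponding rows. It is not: the wiring is part of the definition of $\left\langle\left\langle P,\varphi,Q\right\rangle\right\rangle$, and is the ``obvious'' object-matching map that keeps $P$-labelled and $Q$-labelled summands apart even when $Q=P$. Traced through, the wiring $I'$ sends Yin row 2's $P$ to Yang row 1's $P$, so Yin row 2 (which is $P\hookrightarrow P_{\mathbb{R}}\twoheadrightarrow T_P$) is wired against Yang row 1 (which is $P\hookrightarrow\bigoplus P\overset{s}{\twoheadrightarrow}\bigoplus P$), and likewise Yin row 4 is wired against Yang row 4, not Yang row 1. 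So with the given wiring the Yin and Yang short exact sequences produced by the recipe of \S\ref{sect_Graphics} do \emph{not} coincide, and clause (2) of Theorem \ref{thm_NenashevStylePresentation} does not apply. Rewiring changes the double exact sequence; one must first prove that this change is invisible in $K_1$, which is precisely the content one is being asked to prove.

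Plan B has the right ingredients but invokes the wrong tool. Lemma \ref{lemma_DoubeIso} requires a diagram of isomorphisms \emph{which commutes irrespective of whether we use the Yin or Yang exact sequences}, i.e.\ the same maps $x',x,x''$ on both sides. The discrepancy you have identified needs an isomorphism that is the identity on the Yin side but a swap on the Yang side; Lemma \ref{lemma_DoubeIso} does not cover that. The correct mechanism (and the paper's) is to write down a $(3\times3)$-diagram from $\left\langle\left\langle P,\operatorname{id},P\right\rangle\right\rangle$ to the rewired schematic $S$ with bottom row $0$, where the downward double arrows are identities on the Yin side and the swap maps on the Yang side. Each column is then a double exact sequence of the identity-versus-swap form, so the column classes vanish by Equation \ref{l_C2_NenaAgreeIsZero} and Lemma \ref{Lemma_SwapIsZeroInLCAA}, and the row relation gives $[\left\langle\left\langle P,\operatorname{id},P\right\rangle\right\rangle]=[S]$. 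Since for $S$ the Yin and Yang arrows genuinely coincide, $[S]=0$.
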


\begin{proof}
Unravelling definitions, we find that $\left\langle \left\langle
P,\operatorname*{id},P\right\rangle \right\rangle $ corresponds to the double
exact sequence depicted below on the left:%
\begin{equation}%
{\includegraphics[
height=1.8593in,
width=5.4051in
]%
{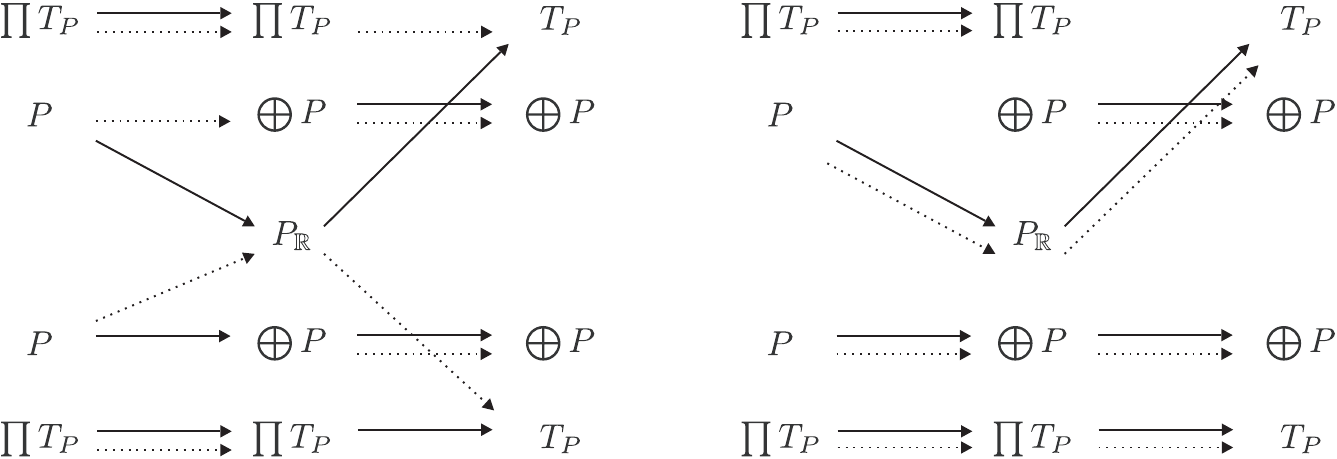}%
}
\label{lsui1}%
\end{equation}
Next, we produce a morphism of double exact sequences from the left schematic
to the right. Objectwise, the maps are: (a) the identity on the Yin side
everywhere, (b) and on the Yang side, we use the swap map%
\begin{align}
\prod T_{P}\oplus\prod T_{P}  &  \longrightarrow\prod T_{P}\oplus\prod
T_{P}\label{lseem1}\\
(x,y)  &  \longmapsto(y,x)\nonumber
\end{align}
and analogously we swap the summands of $P\oplus P$, as well as of $\bigoplus
P\oplus\bigoplus P$. We use the identity map on $P_{\mathbb{R}}$. Let us
explain this along Figure \ref{lsui1}: One can think about the map which we
have just described in two ways: Firstly, as exchanging all objects $\prod
T_{P}$, $P,$ $\bigoplus P$ on the upper half of the figure with their
counterparts on the lower half; the object $P_{\mathbb{R}}$ remains where it
is. When we speak of the top and lower half here, we do not mean the Yin and
Yang side (which often in double exact sequences are depicted as the top and
bottom arrow), but we rather just mean the obvious symmetry of Figure
\ref{lsui1} when mirroring it along the horizontal middle axis. Our swapping
operation only refers to this symmetry and only to the Yang side, i.e. the
dotted arrows. However, in the above figure on the right, we use a different
graphical presentation: Instead of swapping the objects, we leave all the
objects at precisely the same position as before. Instead, we swap all the
dotted (i.e. Yang) arrows between them. So, one way to think about going from
left to right is that all the dotted arrows $(A)\dashrightarrow(B)$ on the
upper half get exchanged with their corresponding arrow $(A)\dashrightarrow
(B)$ on the lower half. In total, this morphism of double exact sequences
gives a $(3\times3)$-diagram%
\[%
\begin{array}
[c]{c}%
\left\langle \left\langle P,\operatorname*{id},P\right\rangle \right\rangle \\
\downdownarrows\\
S\\
\downdownarrows\\
0\text{,}%
\end{array}
\]
where $S$ denotes the double exact sequence on the right in Figure
\ref{lsui1}, the downward arrows \textquotedblleft$\downdownarrows
$\textquotedblright\ are those induced from $\operatorname*{id}:P_{\mathbb{R}%
}\rightarrow P_{\mathbb{R}}$ (for both Yin and Yang), the identity on all
objects on the Yin side, and the swapping maps of Equation \ref{lseem1} for
all other objects on the Yang side. Since these maps are all isomorphisms, we
get the zero double exact sequence as the quotient. The resulting Nenashev
relation%
\[
Row_{1}-Row_{2}+Row_{3}=Col_{1}-Col_{2}+Col_{3}%
\]
of Equation \ref{l_C_Nenashev} simplifies to $[\left\langle \left\langle
P,\operatorname*{id},P\right\rangle \right\rangle ]-[S]+[0]=0$, because
$[Col_{i}]=0$ for $i=1,2,3$. The latter is seen as follows: The column classes
spelled out (and written horizontally to save space) are direct sums of double
exact sequences%
\[
\left[
\xymatrix{
(-) \ar@<1ex>@{^{(}->}[r]^{1} \ar@<-1ex>@{^{(}.>}[r]_{q} & (-) \ar@
<1ex>@{->>}[r] \ar@<-1ex>@{.>>}[r] & 0
}%
\right]  \text{,}%
\]
where $q$ is either the identity or a swapping map. Thus, its class is always
zero, either by the relation of Equation \ref{l_C2_NenaAgreeIsZero} or by
Lemma \ref{Lemma_SwapIsZeroInLCAA}.
\end{proof}

With this preparation, we are ready for a generalization of the previous lemma.

\begin{lemma}
\label{sv2}The map $\vartheta$ respects Relation B (Equation
\ref{l_Swan_RelationB}), i.e. we have%
\[
\lbrack\left\langle \left\langle P,\psi\varphi,R\right\rangle \right\rangle
]=[\left\langle \left\langle P,\varphi,Q\right\rangle \right\rangle
]+[\left\langle \left\langle Q,\psi,R\right\rangle \right\rangle ]\text{.}%
\]

\end{lemma}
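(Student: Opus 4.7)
The plan is to generalize the warm-up Lemma \ref{sv1} by constructing a single $(3\times 3)$-diagram of double exact sequences whose Nenashev relation collapses to the claim, once the column classes are killed by the Yin=Yang vanishing relation and by Lemma \ref{Lemma_SwapIsZeroInLCAA}. The underlying idea is that the direct sum $S:=\langle\langle P,\varphi,Q\rangle\rangle\oplus\langle\langle Q,\psi,R\rangle\rangle$ contains two Yang-side middle crosses, one factoring through $P_{\mathbb{R}}$ and using $\varphi$, the other factoring through $Q_{\mathbb{R}}$ and using $\psi$. I will use the isomorphism $\varphi$ to reroute the second middle cross through $P_{\mathbb{R}}$, at the cost of producing a cancellable $Q$-residue, and then observe that the result is $\langle\langle P,\psi\varphi,R\rangle\rangle$ up to swap-type and Yin=Yang summands.

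First I would write out $S$ explicitly as a $20$-row schematic and identify its two Yang middle crosses as
\[
Q\xrightarrow{\varphi^{-1}\iota_{Q}}P_{\mathbb{R}}\xrightarrow{q_{Q}\varphi}T_{Q},
\qquad
R\xrightarrow{\psi^{-1}\iota_{R}}Q_{\mathbb{R}}\xrightarrow{q_{R}\psi}T_{R}.
\]
Next I would define a second double exact sequence $S'$ on the same underlying objects, but with the second of these middle crosses replaced by $R\xrightarrow{(\psi\varphi)^{-1}\iota_{R}}P_{\mathbb{R}}\xrightarrow{q_{R}\psi\varphi}T_{R}$, i.e. rerouted through $P_{\mathbb{R}}$. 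The isomorphism $\varphi\colon P_{\mathbb{R}}\xrightarrow{\sim}Q_{\mathbb{R}}$ provides a morphism of double exact sequences $S\to S'$: on the Yin side, take the identity everywhere; on the Yang side, act by $\varphi$ on the relevant $P_{\mathbb{R}}$-slot and by the identity on every other summand. The two squares that need checking are precisely the front and back faces of the rerouted middle cross, and they commute by the tautological identities $\varphi\circ\varphi^{-1}\iota_{R}=\iota_{R}$ (pre-composed with $\psi^{-1}$) and $q_{R}\psi\varphi=(q_{R}\psi)\circ\varphi$. Applying Lemma \ref{lemma_DoubeIso} then yields $[S]=[S']$.

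The remaining step is to identify $S'$ as $\langle\langle P,\psi\varphi,R\rangle\rangle\oplus\Sigma$, where $\Sigma$ is a direct sum of double exact sequences of the two zero-class types: sequences with Yin=Yang (killed by Equation \ref{l_C2_NenaAgreeIsZero}), and swap-style sequences of the form treated in Lemma \ref{Lemma_SwapIsZeroInLCAA}. Concretely, once the rerouting through $P_{\mathbb{R}}$ has been carried out, the $P$-rows and $R$-rows of $S'$ reassemble into the ten-row schematic of $\langle\langle P,\psi\varphi,R\rangle\rangle$, while the residual blocks involve only the $Q$-related summands $Q$, $\bigoplus Q$, $\prod T_{Q}$, $T_{Q}$, $Q_{\mathbb{R}}$—each appearing with identical Yin and Yang arrows, or glued via an explicit swap of two copies.

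The main obstacle is this accounting in the last step: one has to set up the wiring isomorphisms $I',I,I''$ of \S\ref{sect_Graphics} so that the reassembly into $\langle\langle P,\psi\varphi,R\rangle\rangle\oplus\Sigma$ is on the nose and so that every component of $\Sigma$ has manifestly vanishing Nenashev class. This is tedious but routine once one has fixed an order on the summands. The commutativity checks in Step 2 and the vanishing of all relevant column classes in the final $(3\times 3)$-diagram are no harder than those already carried out in Lemma \ref{sv3} and Lemma \ref{sv1}; in particular the signed-swap case needed here is covered by the second half of Lemma \ref{Lemma_SwapIsZeroInLCAA} and Remark \ref{rmk_InversionIsTrivial}. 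Combining these ingredients gives $[S]=[S']=[\langle\langle P,\psi\varphi,R\rangle\rangle]$, which is exactly Relation B.
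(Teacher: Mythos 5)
Your overall plan — pass from $S := \left\langle\left\langle P,\varphi,Q\right\rangle\right\rangle\oplus\left\langle\left\langle Q,\psi,R\right\rangle\right\rangle$ to a rerouted $S'$ and then decompose — is the right shape, and it is in fact close to what the paper does. But your Step 2 has a genuine gap. You describe the map $S\to S'$ as ``identity everywhere on the Yin side, and $\varphi$ on the relevant slot on the Yang side,'' and you then invoke Lemma~\ref{lemma_DoubeIso}. This is inconsistent: if $S'$ has a $P_{\mathbb{R}}$-slot where $S$ has a $Q_{\mathbb{R}}$-slot, there is no identity map between them, so the Yin vertical cannot be the identity on that slot; and if the Yin and Yang verticals genuinely differ, Lemma~\ref{lemma_DoubeIso} does not apply (it explicitly requires the same isomorphisms $x',x,x''$ to make both the Yin and the Yang diagram commute). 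The only way to change the middle object $Q_{\mathbb{R}}\leadsto P_{\mathbb{R}}$ with no column contribution is to use $\varphi^{-1}$ on \emph{both} the Yin and Yang side of that slot — but then the Yin middle cross of $S'$ is also rewritten, to $Q\xrightarrow{\varphi^{-1}\iota_{Q}}P_{\mathbb{R}}\xrightarrow{q_{Q}\varphi}T_{Q}$, which undercuts your Step~3.

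Concretely, after this correction the two $P_{\mathbb{R}}$-slots in $S'$ carry the crosses $(\mathrm{Yin}\colon P\to T_{P},\ \mathrm{Yang}\colon Q\to T_{Q})$ and $(\mathrm{Yin}\colon Q\to T_{Q},\ \mathrm{Yang}\colon R\to T_{R})$, respectively. Neither slot gives the $(\mathrm{Yin}\colon P\to T_{P},\ \mathrm{Yang}\colon R\to T_{R})$ cross of $\left\langle\left\langle P,\psi\varphi,R\right\rangle\right\rangle$, so the $P$- and $R$-rows do \emph{not} reassemble as you claim, and the $Q$-residue does \emph{not} have Yin $=$ Yang. You are missing an additional step: a Yang-side swap of the two $P_{\mathbb{R}}$-slots (identity on Yin, swap on Yang), whose column contribution is killed by Lemma~\ref{Lemma_SwapIsZeroInLCAA}. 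Only after that swap do the slots split into the $\left\langle\left\langle P,\psi\varphi,R\right\rangle\right\rangle$-piece and a Yin $=$ Yang $Q$-piece. This is the essential content you are missing; you gesture at ``swap-style residues'' and even at the signed-swap variant, but you never pinpoint the one unavoidable swap (on $P_{\mathbb{R}}\oplus P_{\mathbb{R}}$, Yang side only) that makes the decomposition work, and your appeal to Lemma~\ref{lemma_DoubeIso} in its place is not valid.
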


\begin{proof}
We consider the graphical schematics of $\left\langle \left\langle
P,\varphi,Q\right\rangle \right\rangle $ and $\left\langle \left\langle
Q,\psi,R\right\rangle \right\rangle $, as they were given in Equation
\ref{l_S1}. Using this data as input, we form a new schematic: Write the top
half lines of both $\left\langle \left\langle P,\varphi,Q\right\rangle
\right\rangle $ and $\left\langle \left\langle Q,\psi,R\right\rangle
\right\rangle $ under each other, and analogously for the bottom lines of
each, giving the new schematic which we call $J$. Along with its wiring, it
can be depicted as follows below on the left:%
\begin{equation}%
{\includegraphics[
height=3.9583in,
width=2.7086in
]%
{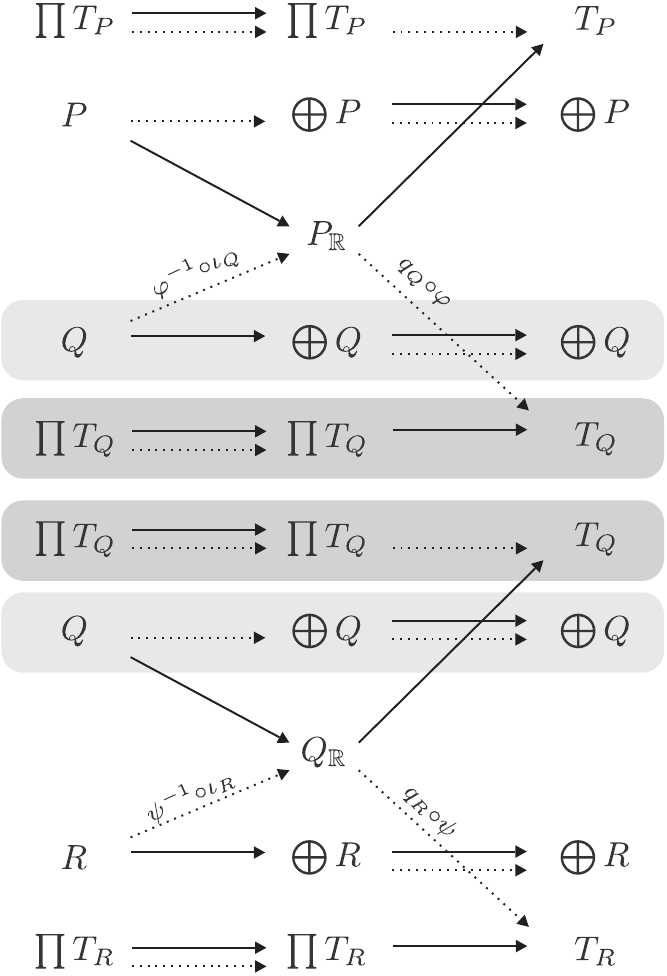}%
}
\qquad%
{\includegraphics[
height=3.9583in,
width=2.7086in
]%
{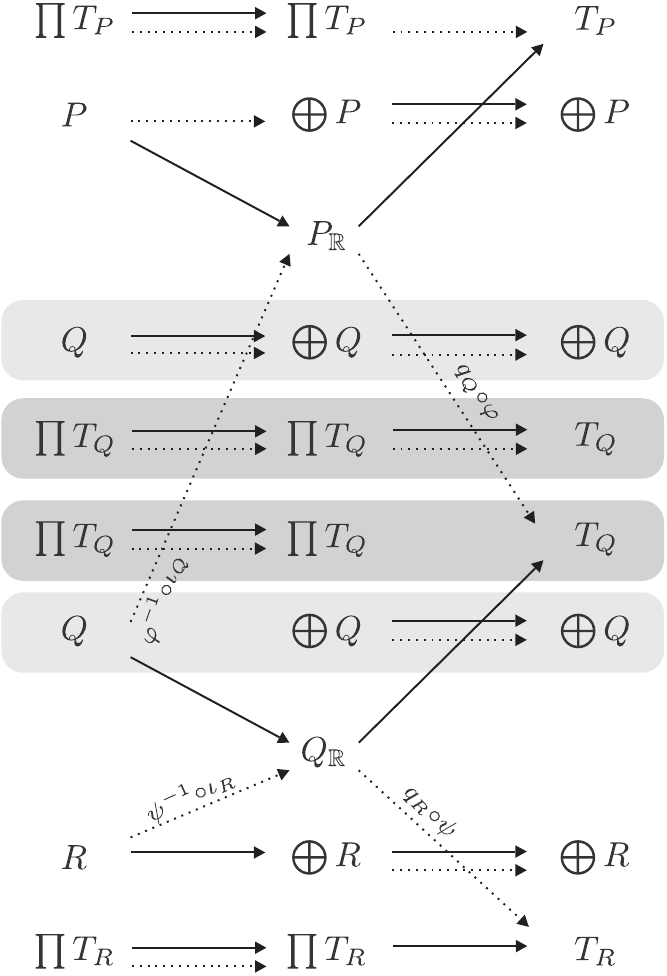}%
}
\label{lmiur5}%
\end{equation}
and we ignore the shaded areas for the moment. This schematic defines a double
exact sequence. We find a natural $(3\times3)$-diagram%
\[%
\begin{array}
[c]{c}%
\left\langle \left\langle P,\varphi,Q\right\rangle \right\rangle \\
\downdownarrows\\
J\\
\downdownarrows\\
\left\langle \left\langle Q,\psi,R\right\rangle \right\rangle \text{,}%
\end{array}
\]
where the top downward arrows \textquotedblleft$\downdownarrows$%
\textquotedblright\ are (both for Yin and Yang) just the inclusion of the
$\left\langle \left\langle P,\varphi,Q\right\rangle \right\rangle
$-subschematic, and correspondingly the bottom arrows \textquotedblleft%
$\downdownarrows$\textquotedblright\ the corresponding quotient maps, and the
quotient is exactly the remaining $\left\langle \left\langle Q,\psi
,R\right\rangle \right\rangle $-subschematic. The downward double exact
sequences are split exact. The resulting Nenashev relation is just%
\begin{equation}
\lbrack\left\langle \left\langle P,\varphi,Q\right\rangle \right\rangle
]-[J]+[\left\langle \left\langle Q,\psi,R\right\rangle \right\rangle
]=[0]-[0]+[0]\text{,}\label{lmiur6}%
\end{equation}
because the columns represent zero since Yin and Yang agree, Equation
\ref{l_C2_NenaAgreeIsZero}. We now set up maps from $J$ to a new schematic:
The idea is that we map the schematic to itself, just that on the objects
$T_{Q},\prod T_{Q},Q,\bigoplus Q$, which each appear twice in the shaded
areas, we use the identity map on the Yin side, but swap both objects on the
Yang side. On $P_{\mathbb{R}}$ and $Q_{\mathbb{R}}$ we use the identity both
for Yin and Yang. This changes the wiring, so actually this is not a map to
the same schematic, but to $J^{\prime}$, as depicted in Figure \ref{lmiur5}
above on the right. As before in the proof of Lemma \ref{sv1}, in this figure
we have left all objects where they were and just adjusted the arrows. The
solid arrows, i.e. the Yin side, of $J$ and $J^{\prime}$ agree, but we see
that on the Yang side all arrows have moved to their respective mirror
partner. The maps which we have just described, set up a $(3\times3)$-diagram%
\begin{equation}%
\begin{array}
[c]{c}%
J\\
\downdownarrows\\
J^{\prime}\\
\downdownarrows\\
0\text{,}%
\end{array}
\label{lmiur4}%
\end{equation}
and the quotient is zero since both the identity maps on the Yin side, as well
as the swapping maps on the Yang side are isomorphisms. The resulting Nenashev
relation is $[J]-[J^{\prime}]+[0]=[0]-[0]+[0]$, since again the columns are
either the identity or swapping maps, so they are zero by Lemma
\ref{Lemma_SwapIsZeroInLCAA}. Next, within the shaded area of Figure
\ref{lmiur5} on the right, we find the doubled versions of the the exact
sequences%
\[
\prod T_{Q}\overset{s}{\hookrightarrow}\prod T_{Q}\twoheadrightarrow
T_{Q}\qquad\text{and}\qquad\prod T_{Q}\overset{1}{\hookrightarrow}\prod
T_{Q}\twoheadrightarrow0\text{,}%
\]%
\[
Q\hookrightarrow\bigoplus Q\overset{s}{\twoheadrightarrow}\bigoplus
Q\qquad\text{and}\qquad0\hookrightarrow\bigoplus Q\overset{1}%
{\twoheadrightarrow}\bigoplus Q\text{,}%
\]
and note that both Yin and Yang morphisms agree. We can map them as a
sub-double exact sequence to $J^{\prime}$, giving a $(3\times3)$-diagram, but
since Yin and Yang agree, they all contribute the zero class. Thus, the class
of $[J^{\prime}]$ agrees with its quotient by these sequences, which yields
the schematic depicted below on the left:%
\[%
{\includegraphics[
height=2.5598in,
width=5.2797in
]%
{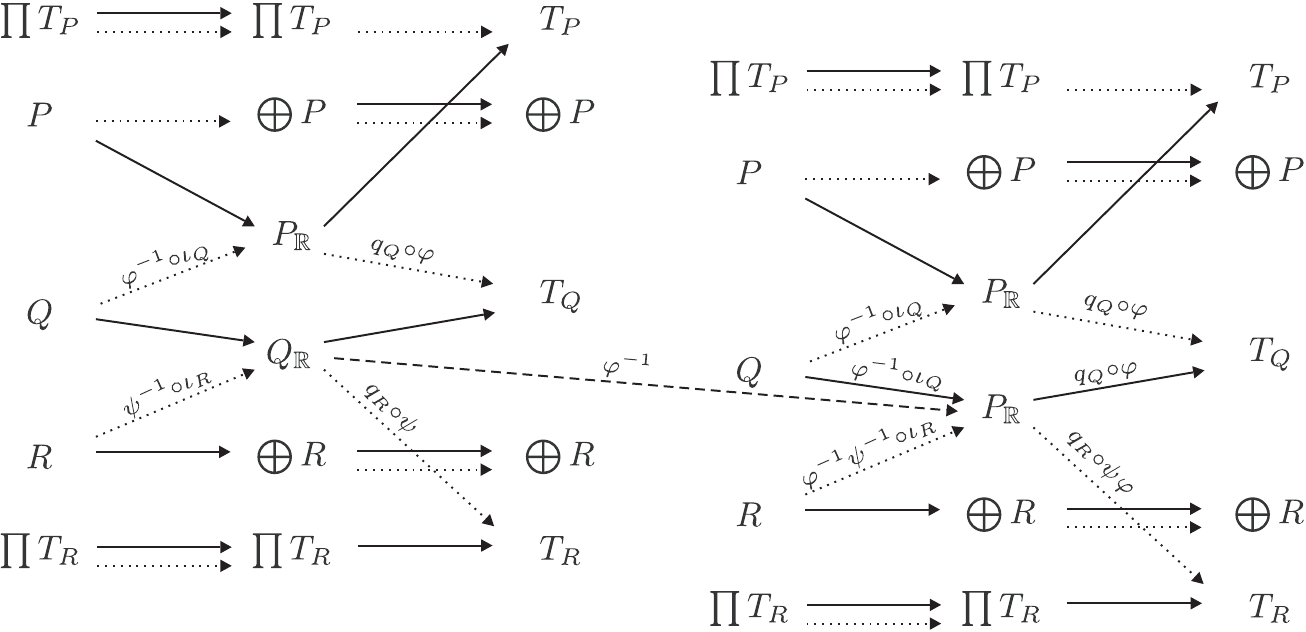}%
}
\]
We now map the schematic on the left to the one depicted above on the right.
To this end, we use identity maps on all objects except for $Q_{\mathbb{R}}$,
where we instead use $\varphi^{-1}$ both for Yin and Yang (as alluded to by
the dashed arrow in the figure). As the reader can see, we have adjusted the
wiring on the right hand side to ensure that this defines a commuting diagram
(obviously it suffices to change the four in- resp. outgoing maps around
$Q_{\mathbb{R}}$ resp. $P_{\mathbb{R}}$ to accomodate for these changes).
Write $J^{\prime\prime}$ for the schematic on the right. The morphism just
described gives rise to another $(3\times3)$-diagram as in Equation
\ref{lmiur4}, this time showing $[J^{\prime}]=[J^{\prime\prime}]$. There is no
contribution from the columns since we only used the identity map, or
$(\varphi^{-1},\varphi^{-1})$ for both Yin and Yang, which is zero by Equation
\ref{l_C2_NenaAgreeIsZero}. Finally, we set up a map from $J^{\prime\prime}$
(above on the right) to $J^{\prime\prime\prime}$, defined as%
\[%
{\includegraphics[
height=2.3281in,
width=2.4267in
]%
{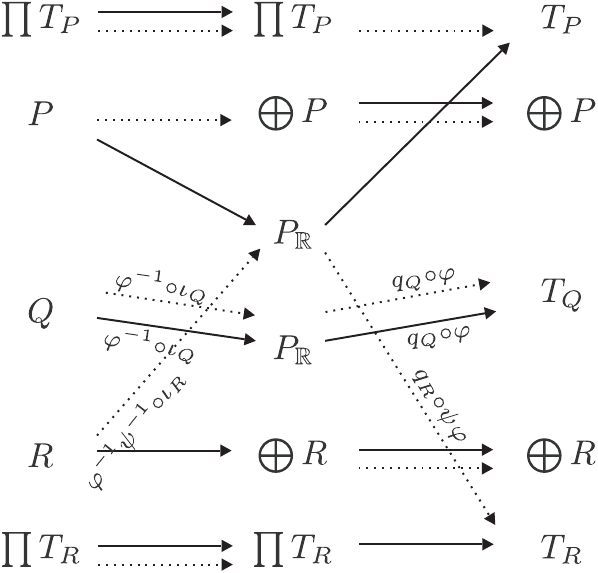}%
}
\]
This map is the identity on all objects on the Yin side. On the Yang side, it
is the identity on all objects, except on the pair $P_{\mathbb{R}}\oplus
P_{\mathbb{R}}$, on which it is the swapping map once more. The morphism again
gives rise to another $(3\times3)$-diagram as in Equation \ref{lmiur4},
showing $[J^{\prime\prime}]=[J^{\prime\prime\prime}]$ since there is no column
contribution (the columns are just the identity for both Yin and Yang, or the
swapping map, which is zero by Lemma \ref{Lemma_SwapIsZeroInLCAA} again). Just
as we had killed the pieces in the shaded areas in Figure \ref{lmiur5}, we can
now get rid off the double exact sequence $Q\hookrightarrow P_{\mathbb{R}%
}\twoheadrightarrow T_{R}$ in the middle, where the Yin and Yang side agree.
The resulting schematic is exactly $\left\langle \left\langle P,\psi
\varphi,R\right\rangle \right\rangle $. Thus, using that we had shown that
$[J]=[J^{\prime}]=[J^{\prime\prime}]=[J^{\prime\prime\prime}]$, Equation
\ref{lmiur6} now shows%
\[
\lbrack\left\langle \left\langle P,\varphi,Q\right\rangle \right\rangle
]-[\left\langle \left\langle P,\psi\varphi,R\right\rangle \right\rangle
]+[\left\langle \left\langle Q,\psi,R\right\rangle \right\rangle ]=0\text{,}%
\]
which proves that\ Relation B is respected under our map $\vartheta$.
\end{proof}

The combination of the previous lemmata settles the proof of Theorem
\ref{thm_VarThetaWellDefined}.

\section{Proof of isomorphy}

\subsection{Recollections on the Gillet--Grayson model\label{subsect_GG}}

Let $\mathsf{C}$ be a pointed exact category, i.e. an exact category with a
fixed choice of a zero object which we shall henceforth denote by
\textquotedblleft$0$\textquotedblright. We briefly summarize the
Gillet--Grayson model. It originates from the articles \cite{MR909784,
MR2007234}. Define a simplicial set $G_{\bullet}\mathsf{C}$ whose
$n$-simplices are given by a pair of commutative diagrams%
\[%
\xymatrix@!=0.157in{
&                         &                         &                        & P_{n/(n-1)}
\\
&                         &                         & \cdots\ar@{^{(}%
.>}[r] & \vdots\ar@{.>>}[u] \\
&                         & P_{2/1} \ar@{^{(}.>}[r] & \cdots\ar@{^{(}%
.>}[r] & P_{n/1} \ar@{.>>}[u] \\
& P_{1/0} \ar@{^{(}.>}[r] & P_{2/0} \ar@{^{(}.>}[r] \ar@{.>>}[u] & \cdots
\ar@{^{(}.>}[r] & P_{n/0} \ar@{.>>}[u] \\
P_0 \ar@{^{(}.>}[r] & P_1 \ar@{^{(}.>}[r] \ar@{.>>}[u] & P_2 \ar@{^{(}%
.>}[r] \ar@{.>>}[u] & \cdots\ar@{^{(}.>}[r] & P_n \ar@{.>>}[u]
}%
\qquad%
\xymatrix@!=0.157in{
&                         &                         &                        & P_{n/(n-1)}
\\
&                         &                         & \cdots\ar@{^{(}%
->}[r] & \vdots\ar@{->>}[u] \\
&                         & P_{2/1} \ar@{^{(}->}[r] & \cdots\ar@{^{(}%
->}[r] & P_{n/1} \ar@{->>}[u] \\
& P_{1/0} \ar@{^{(}->}[r] & P_{2/0} \ar@{^{(}->}[r] \ar@{->>}[u] & \cdots
\ar@{^{(}->}[r] & P_{n/0} \ar@{->>}[u] \\
P^{\prime}_0 \ar@{^{(}->}[r] & P^{\prime}_1 \ar@{^{(}->}[r] \ar@
{->>}[u] & P^{\prime}_2 \ar@{^{(}->}[r] \ar@{->>}[u] & \cdots\ar@{^{(}%
->}[r] & P^{\prime}_n \ar@{->>}[u]
}%
\text{,}%
\]
such that (1) the diagrams agree strictly above the bottom row, (2) all
sequences $P_{i}\hookrightarrow P_{j}\twoheadrightarrow P_{j/i}$ are exact,
(2') all sequences $P_{i}^{\prime}\hookrightarrow P_{j}^{\prime}%
\twoheadrightarrow P_{j/i}^{\prime}$ are exact, (3) all sequences
$P_{i/j}\hookrightarrow P_{m/j}\twoheadrightarrow P_{m/i}$ are exact.

The face and degeneracy maps come from deleting the $i$-th row and column, or
by duplicating them. For details we refer to the references. Thus, the
$0$-simplices are pairs $(P,P^{\prime})$ of objects. The $1$-simplices are
pairs of exact sequences%
\begin{equation}%
\xymatrix{
P_0 \ar@{^{(}.>}[r] & P_1 \ar@{.>>}[r] & P_{1/0} & \qquad& P^{\prime}%
_0 \ar@{^{(}->}[r] & P^{\prime}_1 \ar@{->>}[r] & P_{1/0}
}
\label{lcimez22}%
\end{equation}
with the same cokernel. The main result of Gillet and Grayson is the
equivalence%
\[
K(\mathsf{C})\cong\left\vert G_{\bullet}\mathsf{C}\right\vert \text{,}%
\]
or more specifically:\ The space $\left\vert G_{\bullet}\mathsf{C}\right\vert
$ is an infinite loop space and as such equivalent to a connective spectrum.
This spectrum is canonically equivalent to the $K$-theory spectrum of
$\mathsf{C}$. As explained in Remark \ref{rmk_CompatWithLiterature} the
$0$-simplex $(P,P^{\prime})$ lies in the connected component $[P^{\prime
}]-[P]\in\pi_{0}K(\mathsf{C})$.

\begin{definition}
[Nenashev]\label{def_e_loop}To any double exact sequence $\kappa$, Nenashev
attaches a loop $e(\kappa)\in\pi_{1}K(\mathsf{C})$. Concretely,%
\[
\kappa=\left[
\xymatrix{
A \ar@<1ex>@{^{(}->}[r]^{p} \ar@<-1ex>@{^{(}.>}[r]_{q} & B \ar@<1ex>@{->>}%
[r]^{r} \ar@<-1ex>@{.>>}[r]_{s} & C
}%
\right]
\]
is mapped to a path made from three $1$-simplices in the Gillet--Grayson
model, namely

\begin{enumerate}
\item we go from $(0,0)$ to $(A,A)$ by the edge
\[%
\xymatrix{
0 \ar@{^{(}.>}[r] & A \ar@{.>>}[r]^{1} & A & \qquad& 0 \ar@{^{(}%
->}[r] & A \ar@{->>}[r]^{1} & A
}%
\text{,}%
\]

\item then we go from $(A,A)$ to $(B,B)$ by the edge%
\[%
\xymatrix{
A \ar@{^{(}.>}[r]^{q} & B \ar@{.>>}[r]^{s} & C & \qquad& A \ar@{^{(}->}[r]^{p}
& B \ar@{->>}[r]^{r} & C
}%
\]

\item and then we return from $(B,B)$ to $(0,0)$ by running backwards along%
\[%
\xymatrix{
0 \ar@{^{(}.>}[r] & B \ar@{.>>}[r]^{1} & B & \qquad& 0 \ar@{^{(}%
->}[r] & B \ar@{->>}[r]^{1} & B
}%
\text{.}%
\]

\end{enumerate}

If $A\in\mathsf{C}$ is an object, $e(A)$ denotes the edge%
\[%
\xymatrix{
0 \ar@{^{(}.>}[r] & A \ar@{.>>}[r]^{1} & A & \qquad& 0 \ar@{^{(}%
->}[r] & A \ar@{->>}[r]^{1} & A
}%
\]
from $(0,0)$ to $(A,A)$.
\end{definition}

The path $e(\kappa)$ is visibly a closed loop. All vertices lie in the
connected component of zero in $\left\vert G_{\bullet}\mathsf{C}\right\vert $.
This construction agrees with the $e(\kappa)$ in \cite[Equation 2.2 $\frac
{1}{2}$]{MR1637539}, except for the swapped roles of left and right, in line
with Remark \ref{rmk_CompatWithLiterature}. In summary: Whenever we use a
double exact sequence in the Nenashev presentation, it corresponds
homotopically to the closed loop just described.

\subsection{Proof}

\begin{theorem}
\label{thm_IdentifySeq}Suppose $\mathfrak{A}$ is a regular order in a
semisimple finite-dimensional $\mathbb{Q}$-algebra $A$. Then there is a
commutative diagram%
\begin{equation}%
\xymatrix{
\cdots\ar[r] & K_{1}(\mathfrak{A},{\mathbb{R} }) \ar[r] \ar[d] & K_{1}%
(\mathfrak{A}) \ar@{=}[d] \ar[r] & K_{1}(A_{{\mathbb{R} }}) \ar@{=}%
[d] \ar[r]^{\delta} \ar@{}[dr]|{X} & K_{0}(\mathfrak{A},{\mathbb{R} }%
) \ar[d]^{\vartheta} \ar[r] \ar@{}[dr]|{Y} & K_{0}(\mathfrak{A}) \ar@
{=}[d] \ar[r] & \cdots\\
\cdots\ar[r] & K_{2}(\mathsf{LCA}_{\mathfrak{A}}) \ar[r] & K_{1}(\mathfrak
{A}) \ar[r] & K_{1}(A_{{\mathbb{R} }}) \ar[r] & K_{1}(\mathsf{LCA}%
_{\mathfrak{A}}) \ar[r]_{\partial} & K_{0}(\mathfrak{A}) \ar[r] & \cdots\\
}
\label{lcuup1}%
\end{equation}
and the map $\vartheta$ of Theorem \ref{thm_VarThetaWellDefined} is an isomorphism.
\end{theorem}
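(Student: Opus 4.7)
The plan is to establish commutativity of the two squares $X$ and $Y$ in Diagram \ref{lcuup1} and then to apply the five-lemma. The remaining vertical maps are either the identity or (for $K_1(\mathfrak{A},\mathbb{R}) \to K_2(\mathsf{LCA}_{\mathfrak{A}})$) the degree-shifted, previously-established isomorphism of \cite{etnclca}. Well-definedness of $\vartheta$ already comes from Theorem \ref{thm_VarThetaWellDefined}, so only the two commutativities need attention before the five-lemma fires.

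Square $Y$ should be the easier of the two. The Bass--Swan boundary sends $[P,\varphi,Q]$ to $[P] - [Q] \in K_0(\mathfrak{A})$, while the bottom $\partial$ in the LCA-sequence measures, for a Nenashev class, the difference between the discrete-module components of the constituent objects of the double exact sequence (up to acyclic summands). I would process the schematic in Equation \ref{l_S1} summand by summand: the auxiliary factors $\prod T_P$, $\bigoplus P$ (and analogously for $Q$) are annihilated by $\partial$ via Eilenberg-swindle arguments in the spirit of Lemma \ref{Lemma_SwapIsZeroInLCAA}, leaving only the two essential exact sequences $P \hookrightarrow P_{\mathbb{R}} \twoheadrightarrow T_P$ and the $\varphi$-twisted sequence of Equation \ref{l_siup2a} as contributors. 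Together these produce exactly $[P] - [Q]$.

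Square $X$ is the hard step. After lifting $\varphi \in K_1(A_{\mathbb{R}})$ to an automorphism of some $P_{\mathbb{R}}$ with $P$ a finitely generated projective right $\mathfrak{A}$-module, the top boundary reads $\delta(\varphi) = [P,\varphi,P]$, so I must identify $\vartheta([P,\varphi,P]) = \langle\langle P,\varphi,P\rangle\rangle$ with the canonical Nenashev class of $\varphi$ viewed as an automorphism of $P_{\mathbb{R}} \in \mathsf{LCA}_{\mathfrak{A}}$ as displayed in Remark \ref{rmk_CompatWithLiterature}. In contrast to the special case $\varphi = 1$ treated in Lemma \ref{sv1}, purely formal shuffling inside the Nenashev presentation will not suffice here, because the canonical map $K_1(A_{\mathbb{R}}) \to K_1(\mathsf{LCA}_{\mathfrak{A}})$ is defined through the abstract infinite loop space structure and not via any explicit Nenashev formula.

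To close this gap I would descend to the Gillet--Grayson simplicial model of \S \ref{subsect_GG}. Both classes translate via Definition \ref{def_e_loop} into explicit closed loops $e(\langle\langle P,\varphi,P\rangle\rangle)$ and $e$ of the canonical automorphism class in $|G_\bullet \mathsf{LCA}_{\mathfrak{A}}|$, and the task becomes the construction of a bounding $2$-disk realising a simplicial homotopy between them by assembling higher $G$-simplices. The plan is a two-stage filling: first, Eilenberg-swindle homotopies realised simplicially contract away the auxiliary summands $\prod T_P, \bigoplus P$; second, one fills the remaining core --- built from $P \hookrightarrow P_{\mathbb{R}} \twoheadrightarrow T_P$ and its $\varphi$-twist --- by explicit higher $G$-simplices honouring the exactness conditions (1), (2), (2'), (3) of the Gillet--Grayson model. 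This explicit simplicial construction is the main obstacle. Once it is done, square $X$ commutes, and the five-lemma applied to Diagram \ref{lcuup1} yields that $\vartheta$ is an isomorphism.
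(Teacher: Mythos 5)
Your top-level architecture (commutativity of $X$ and $Y$, then the five lemma) matches the paper, but you have misdiagnosed which square is the hard one, and as a result your proposed method for each square points in the wrong direction.

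You identify $X$ as the hard square and propose a Gillet--Grayson simplicial homotopy argument for it, reasoning that the map $K_{1}(A_{\mathbb{R}})\rightarrow K_{1}(\mathsf{LCA}_{\mathfrak{A}})$ \emph{``is defined through the abstract infinite loop space structure and not via any explicit Nenashev formula.''} This is a misconception: that map is induced by the exact functor $\operatorname*{PMod}(A_{\mathbb{R}})\rightarrow\mathsf{LCA}_{\mathfrak{A}}$, exact functors act on Nenashev presentations by sending double exact sequences to double exact sequences, and the Nenashev image of an automorphism is given explicitly by Remark \ref{rmk_CompatWithLiterature}. Because of this, the paper settles $X$ entirely inside the Nenashev presentation by a $(3\times3)$-diagram argument that reruns the pattern of Lemma \ref{sv1} (with the $\varphi$-twisted middle sequence $\mathfrak{A}^{n}\hookrightarrow A_{\mathbb{R}}^{n}\twoheadrightarrow T_{\mathfrak{A}}^{n}$ now contributing a nontrivial class) followed by Lemma \ref{lemma_leftrightswap}. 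No simplicial homotopy is needed. Your plan for $X$ is not false, but it is vastly overengineered.

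The genuine gap is in your treatment of $Y$. You assert that $\partial$ \emph{``measures, for a Nenashev class, the difference between the discrete-module components of the constituent objects \ldots (up to acyclic summands)''} and propose to strip away the $\prod T_{P}$, $\bigoplus P$, etc., by Eilenberg swindles and then read off $[P]-[Q]$. But $\partial$ has no such combinatorial description on the level of the Nenashev presentation: it is the connecting map in the long exact homotopy sequence coming from the bi-Cartesian square of localization sequences built in \cite{etnclca}, and computing it on a given class forces one out of the generator-relator world. This is precisely where the Gillet--Grayson model is unavoidable. Concretely, one must first pass $\left\langle\left\langle P,\varphi,Q\right\rangle\right\rangle$ to the quotient category $\mathsf{LCA}_{\mathfrak{A}}/\mathsf{LCA}_{\mathfrak{A},cg}\simeq\mathsf{Mod}_{\mathfrak{A}}/\mathsf{Mod}_{\mathfrak{A},fg}$ (this is where the compactly generated pieces become zero, so the schematic collapses to the shift-map double exact sequence $l$ of Equation \ref{lmiur20_2} --- and its Nenashev class is \emph{not} zero), and then lift the closed loop $e(l)$ to a non-closed path in $|G_{\bullet}\mathsf{Mod}_{\mathfrak{A}}|$ by replacing the edge $e(l)$ with the edge of Equation \ref{l_55}. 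The endpoint $(Q,P)$ of that lifted path is what gives $[P]-[Q]$. Your proposal omits this lifting entirely, which is the heart of the matter; an Eilenberg swindle inside $K_{1}(\mathsf{LCA}_{\mathfrak{A}})$ cannot produce the $K_{0}$ class because the swindle lives in the wrong degree and the wrong category.

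So: keep the five-lemma shell, but swap your intuitions. Square $X$ is the one you can do by hand in Nenashev generators; square $Y$ is the one where you need the simplicial loop lifting.
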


We split the proof into several lemmata.

\begin{lemma}
\label{sw1}The square denoted by $X$ in Equation \ref{lcuup1} commutes.
\end{lemma}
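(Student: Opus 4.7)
The boundary $\delta: K_1(A_{\mathbb{R}}) \to K_0(\mathfrak{A}, \mathbb{R})$ in Bass--Swan form sends the class of an automorphism $\alpha: M_{\mathbb{R}} \to M_{\mathbb{R}}$ (with $M$ a finitely generated projective right $\mathfrak{A}$-module) to $[M, \alpha, M]$, whereas the map $K_1(A_{\mathbb{R}}) \to K_1(\mathsf{LCA}_{\mathfrak{A}})_{\operatorname{Nenashev}}$ sends the same class to the automorphism $\alpha$ viewed in $\mathsf{LCA}_{\mathfrak{A}}$, which by Remark \ref{rmk_CompatWithLiterature} is represented by
\[
[\alpha]_{\mathsf{LCA}} \;:=\; \left[ 0 \hookrightarrow M_{\mathbb{R}} \twoheadrightarrow M_{\mathbb{R}} \right]
\]
with Yin epimorphism $\alpha$ and Yang epimorphism $1$. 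Commutativity of the square $X$ thus reduces to
\[
[\langle\langle M, \alpha, M \rangle\rangle] \;=\; [\alpha]_{\mathsf{LCA}} \qquad \text{in } K_1(\mathsf{LCA}_{\mathfrak{A}})_{\operatorname{Nenashev}}.
\]

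My plan is to imitate the proof of Lemma \ref{sv1}, which handled the degenerate case $\alpha = 1_{M_{\mathbb{R}}}$ by exploiting the top--bottom mirror symmetry of the schematic in Equation \ref{l_S1} when $P = Q = M$. For general $\alpha$ that symmetry persists in every row \emph{except} row $9$, which now reads $M \xrightarrow{\alpha^{-1} \iota_M} M_{\mathbb{R}} \xrightarrow{q_M \alpha} T_M$ rather than the standard $M \xrightarrow{\iota_M} M_{\mathbb{R}} \xrightarrow{q_M} T_M$ of row $2$. Applying the same $(3 \times 3)$-construction as in Lemma \ref{sv1} -- identity on the Yin side, Yang-side swap maps exchanging the paired copies of $M$, $\prod T_M$ and $\bigoplus M$ lying above and below the delimiter -- every column vanishes by Lemma \ref{Lemma_SwapIsZeroInLCAA} or Equation \ref{l_C2_NenaAgreeIsZero}. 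What is \emph{not} cancelled by the swap is precisely the residual asymmetric double exact sequence
\[
\kappa_\alpha \;:=\; \left[ M \hookrightarrow M_{\mathbb{R}} \twoheadrightarrow T_M \right]
\]
with Yin morphisms $(\iota_M, q_M)$ and Yang morphisms $(\alpha^{-1}\iota_M, q_M \alpha)$. After this first step, $[\langle\langle M, \alpha, M \rangle\rangle] = [\kappa_\alpha]$.

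The second step is to show $[\kappa_\alpha] = [\alpha]_{\mathsf{LCA}}$. The key observation is that the automorphism $\alpha: M_{\mathbb{R}} \to M_{\mathbb{R}}$ converts the Yang sequence of $\kappa_\alpha$ into the Yin sequence via the elementary identities $\alpha \circ (\alpha^{-1} \iota_M) = \iota_M$ and $(q_M \alpha) \circ \alpha^{-1} = q_M$. I would realise this through a second $(3 \times 3)$-diagram whose top row is $\kappa_\alpha$, whose bottom row is the trivial sequence $[M \hookrightarrow M_{\mathbb{R}} \twoheadrightarrow T_M]$ with Yin $=$ Yang (class $0$ by Equation \ref{l_C2_NenaAgreeIsZero}), and whose middle row is an auxiliary split exact sequence arranged so that the middle column isolates the automorphism $\alpha$ on $M_{\mathbb{R}}$. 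The left and right columns are of trivial shape and vanish by Lemma \ref{lemma_leftrightswap} or Equation \ref{l_C2_NenaAgreeIsZero}; the Nenashev relation of the diagram then collapses to $[\kappa_\alpha] = [\alpha]_{\mathsf{LCA}}$.

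The main obstacle is precisely the construction of this second $(3 \times 3)$-diagram: arranging the auxiliary middle row so that every column is a genuine short exact sequence in $\mathsf{LCA}_{\mathfrak{A}}$ and so that the column contributions individually collapse to $0$ or to $[\alpha]_{\mathsf{LCA}}$. Should a direct construction prove unwieldy, the alternative is to apply Lemma \ref{lemma_DoubeIso} after padding $\kappa_\alpha$ by split exact pieces so that its triple of objects matches that of $[\alpha]_{\mathsf{LCA}}$; the automorphism $\alpha$ on the middle term, together with identities elsewhere, then provides an isomorphism of the enlarged double exact sequences compatible with both Yin and Yang, giving the required equality at once.
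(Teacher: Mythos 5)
Your overall strategy agrees with the paper's: reduce to showing $[\langle\langle M,\alpha,M\rangle\rangle]=[\alpha]_{\mathsf{LCA}}$, then carry out the reduction in two steps, first a mirror-swap argument imitating Lemma~\ref{sv1} to pass to the lone asymmetric piece $\kappa_\alpha$, then a second $(3\times3)$-diagram to identify $[\kappa_\alpha]$ with the Nenashev class of the automorphism. Step~1 matches the paper exactly (the paper also only sketches it, citing the pattern of Lemma~\ref{sv1}). The divergence is in Step~2, which you correctly flag as the main obstacle but do not resolve, and the arrangement you sketch is not the one that works cleanly.

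Concretely, you propose a $(3\times3)$-diagram with $\kappa_\alpha$ on top, the Yin-equals-Yang sequence $M\hookrightarrow M_{\mathbb R}\twoheadrightarrow T_M$ on the \emph{bottom}, and an auxiliary split middle row. That forces the middle row to have doubled objects $M\oplus M$, $M_{\mathbb R}\oplus M_{\mathbb R}$, $T_M\oplus T_M$, and you would then have to engineer Yang-side column maps compatible with both $\alpha^{-1}\iota$, $q\alpha$ on top and $\iota$, $q$ on the bottom, which is more delicate than you acknowledge. The paper instead puts the Yin-equals-Yang sequence $M\hookrightarrow M_{\mathbb R}\twoheadrightarrow T_M$ in the \emph{middle} row and $0\to0\to0$ on the bottom. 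Then every column has the simple shape $X\hookrightarrow X\twoheadrightarrow 0$: the outer columns use the identity on both Yin and Yang (hence vanish by Equation~\ref{l_C2_NenaAgreeIsZero}), and the middle column has Yin mono $1$ and Yang mono $\varphi$, so it \emph{is} (up to Lemma~\ref{lemma_leftrightswap}) the Nenashev class of $\varphi$ in $\mathsf{LCA}_{\mathfrak A}$. The Nenashev $(3\times3)$-relation then reads off the identity directly. Note also that Lemma~\ref{lemma_leftrightswap} is not a vanishing statement, so your invocation of it to kill the outer columns is a mis-attribution; its actual role is the final translation between the mono-form and epi-form of the automorphism class.

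Your fallback plan, padding $\kappa_\alpha$ by split pieces and applying Lemma~\ref{lemma_DoubeIso}, cannot work: the triple of objects of $\kappa_\alpha$ is $(M,\,M_{\mathbb R},\,T_M)$, the triple of $[\alpha]_{\mathsf{LCA}}$ is $(0,\,M_{\mathbb R},\,M_{\mathbb R})$, and no direct-sum padding makes a discrete lattice $M$ isomorphic in $\mathsf{LCA}_{\mathfrak A}$ to a padded zero object, nor the compact torus $T_M$ isomorphic to a padded vector module. The extension $M\hookrightarrow M_{\mathbb R}\twoheadrightarrow T_M$ does not split, so Lemma~\ref{lemma_DoubeIso} is not applicable here; the $(3\times3)$-relation is genuinely needed.
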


\begin{proof}
Let an element in $K_{1}(A_{\mathbb{R}})$ be given, say represented by some
isomorphism $\varphi\in\operatorname{GL}_{n}(A_{\mathbb{R}})$ for $n$
sufficiently large. We follow the downward and then right arrow first: The
downward arrow is the identity. The bottom map is induced from the exact
functor $\operatorname*{PMod}(A_{\mathbb{R}})\rightarrow\mathsf{LCA}%
_{\mathfrak{A}}$ which sends a finitely generated projective right
$A_{\mathbb{R}}$-module to itself, regarded as a locally compact module with
the real topology (since $A_{\mathbb{R}}$ is semisimple, finitely generated
projective just means free of finite rank here). This class corresponds to an
automorphism and by Remark \ref{rmk_CompatWithLiterature} it has the Nenashev
representative%
\begin{equation}%
\xymatrix{
0 \ar@<1ex>@{^{(}->}[r]^-{0} \ar@<-1ex>@{^{(}.>}[r]_-{0} & {A_{\mathbb{R} }^n}
\ar@<1ex>@{->>}[r]^{\varphi} \ar@<-1ex>@{.>>}[r]_{1} & {{A_{\mathbb{R} }^n}%
}\text{.}
}%
\label{lmiur22}%
\end{equation}
Next, follow the square the other way: Following the top right arrow, we get%
\[
\delta(\varphi)=[\mathfrak{A}^{n},\varphi,\mathfrak{A}^{n}]\in K_{0}%
(\mathfrak{A},\mathbb{R})\text{,}%
\]
using the explicit presentation, see \cite[p. 215]{MR0245634}. The map
$\vartheta$ sends this to the class of $\left\langle \left\langle
\mathfrak{A}^{n},\varphi,\mathfrak{A}^{n}\right\rangle \right\rangle $, which
in turn unravels as the double exact sequence underlying the schematic
depicted below on the left:%
\[%
{\includegraphics[
height=1.8593in,
width=5.463in
]%
{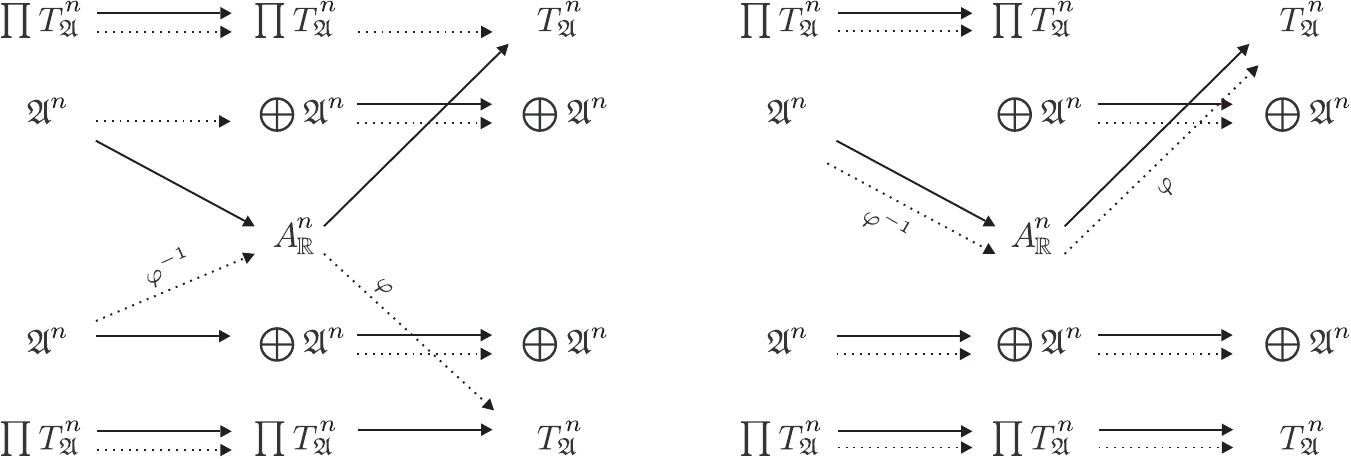}%
}
\]
We now show that its class agrees with the class of the schematic depicted
above on the right. The proof for this follows exactly the same pattern as the
proof of Lemma \ref{sv1} and we leave the details to the reader. However,
unlike in the cited proof, this time the middle double exact sequence (the one
which is bent in the figure above on the right) can be non-trivial. We use it
as the first row in the following $(3\times3)$-diagram:%
\[%
\xymatrix@W=0.3in@H=0.3in{
{\mathfrak{A}^n } \ar@<1ex>@{^{(}->}[r]^{ {\iota_{{\mathfrak{A}^n }}}}
\ar@<1ex>@{^{(}.>}[d]^{1}
\ar@<-1ex>@{^{(}->}[d]_{1} \ar@<-1ex>@{^{(}.>}[r]_{\varphi^{-1} }
& {A_{\mathbb{R}}^n } \ar@<1ex>@{->>}[r]^{ {q_{{\mathfrak{A}^n }}} }
\ar@<-1ex>@{.>>}[r]_{\varphi}
\ar@<-1ex>@{^{(}->}[d]_{1} \ar@<1ex>@{^{(}.>}[d]^{\varphi} & {T_{\mathfrak{A}
}^n } \ar@<1ex>@{^{(}.>}[d]^{1} \ar@<-1ex>@{^{(}->}[d]_{1} \\
{\mathfrak{A}^n } \ar@<1ex>@{^{(}->}[r]^{ {\iota_{{\mathfrak{A}^n }}} }
\ar@<-1ex>@{^{(}.>}[r]_{ {\iota_{{\mathfrak{A}^n }}} }
\ar@<1ex>@{.>>}[d] \ar@<-1ex>@{->>}[d] & {A_{\mathbb{R}}^n } \ar@
<1ex>@{->>}[r]^{ {q_{{\mathfrak{A}^n }}} } \ar@<-1ex>@{.>>}%
[r]_{ {q_{{\mathfrak{A}^n }}}} \ar@<1ex>@{.>>}[d] \ar@<-1ex>@{->>}%
[d] & {T_{\mathfrak{A} }^n } \ar@<1ex>@{.>>}[d] \ar@<-1ex>@{->>}[d] \\
0 \ar@<1ex>@{^{(}->}[r] \ar@<-1ex>@{^{(}.>}[r] & 0 \ar@<1ex>@{->>}%
[r] \ar@<-1ex>@{.>>}[r] & 0 \\
}%
\]
Using that the top row of this diagram represents the class of $\left\langle
\left\langle \mathfrak{A}^{n},\varphi,\mathfrak{A}^{n}\right\rangle
\right\rangle $ as we had just shown, the resulting Nenashev relation, i.e.
the relation of Equation \ref{l_C_Nenashev}, of this diagram reads%
\[
\lbrack\left\langle \left\langle \mathfrak{A}^{n},\varphi,\mathfrak{A}%
^{n}\right\rangle \right\rangle ]-[0]+[0]=[0]-\left[
\xymatrix{
{{A_{\mathbb{R} }^n}} \ar@<1ex>@{^{(}->}[r]^-{1} \ar@<-1ex>@{^{(}%
.>}[r]_-{\varphi} & {A_{\mathbb{R} }^n} \ar@<1ex>@{->>}[r]^-{1} \ar@
<-1ex>@{.>>}[r]_-{1} & 0\text{.}
}%
\right]  +[0]\text{,}%
\]
but this agrees with Equation \ref{lmiur22} by Lemma \ref{lemma_leftrightswap}.
\end{proof}

We recall the following definition since it plays a fairly important r\^{o}le
in the next proof.

\begin{definition}
\label{def_cg}Suppose $G\in\mathsf{LCA}$ is an LCA group. A subset $U\subseteq
G$ is called \emph{symmetric} if $g\in U$ implies $-g\in U$. The group $G$ is
called \emph{compactly generated} if there exists a symmetric compact subset
$C\subseteq G$ such that $G=\bigcup_{n\geq0}C^{n}$. We write $\mathsf{LCA}%
_{\mathfrak{A},cg}$ for the fully exact subcategory of topological right
$\mathfrak{A}$-modules whose underlying LCA group is compactly generated.
\end{definition}

\begin{lemma}
\label{sw2}The square denoted by $Y$ in Equation \ref{lcuup1} commutes.
\end{lemma}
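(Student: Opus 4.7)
The top boundary map is explicit in the Bass--Swan presentation: the connecting homomorphism sends $[P,\varphi,Q]\mapsto[P]-[Q]$ in $K_0(\mathfrak{A})$, by the same formula used for $\delta$ in Lemma \ref{sw1} (cf.\ \cite[p.~215]{MR0245634}). My plan is therefore to compute $\partial\vartheta[P,\varphi,Q]=\partial[\langle\langle P,\varphi,Q\rangle\rangle]$ directly and show it equals $[P]-[Q]$, using a decomposition of the schematic (\ref{l_S1}) into a well-understood ``core'' and several pieces that are transparent to $\partial$.

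First I would peel off the Eilenberg-swindle parts. The bulk of the schematic consists of rows built from the shift maps
\[\bigoplus P\overset{s}{\twoheadrightarrow}\bigoplus P, \qquad \prod T_P\overset{s}{\hookrightarrow}\prod T_P,\]
together with the analogous rows for $Q$. Each such row lives either in the full exact subcategory of discrete $\mathfrak{A}$-modules or in the compact subcategory $\mathsf{LCA}_{\mathfrak{A},C}$. Both of these categories are closed under the relevant countable product/coproduct (Tychonoff for the compact side), so admit an Eilenberg swindle, and hence have vanishing $K_1$, exactly as exploited in the second part of the proof of Lemma \ref{Lemma_SwapIsZeroInLCAA}. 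The same reasoning, applied through Lemma \ref{sw1}-style $(3\times3)$-diagrams that separate the shift-map rows from the rest, shows that these rows contribute $0$ after applying the exact functor to $\mathsf{LCA}_{\mathfrak{A}}$, hence also contribute $0$ to $\partial$.

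Next I would analyse the essential core that remains. After the swindle parts are stripped, what survives of $\langle\langle P,\varphi,Q\rangle\rangle$ is controlled by the two natural short exact sequences
\[P\overset{\iota_P}{\hookrightarrow}P_{\mathbb{R}}\overset{q_P}{\twoheadrightarrow}T_P \qquad\text{and}\qquad Q\overset{\varphi^{-1}\iota_Q}{\hookrightarrow}P_{\mathbb{R}}\overset{q_Q\varphi}{\twoheadrightarrow}T_Q,\]
appearing as the Yin row and (after the $\varphi$-translation of \eqref{l_siup2a}) the Yang row, glued with identity wirings on the remaining summands. Concretely, this core represents the same $K_1$-class as a double exact sequence whose Yin/Yang difference records precisely the discrepancy between the lattice $P\subset P_{\mathbb{R}}$ and the lattice $\varphi^{-1}(Q)\subset P_{\mathbb{R}}$. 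Invoking the Gillet--Grayson description of $\partial$ from \S\ref{subsect_GG} (lifting the loop $e(\langle\langle P,\varphi,Q\rangle\rangle)$ of Definition \ref{def_e_loop} across the fibre sequence $K(\mathfrak{A})\to K(A_{\mathbb{R}})\to K(\mathsf{LCA}_{\mathfrak{A}})$), the endpoint of that lift lies in the $K(\mathfrak{A})$-fibre and is given by the pair $(P,Q)$, whose class in $\pi_0$ is $[P]-[Q]$.

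Combining both steps via Nenashev's $(3\times3)$-relation \eqref{l_C_Nenashev} yields the identity $\partial[\langle\langle P,\varphi,Q\rangle\rangle]=[P]-[Q]$, which is exactly the commutativity of square $Y$. The main obstacle I expect is the second step: turning the informal ``core analysis'' into a clean computation. The wiring in the schematic mixes the two rows with a permutation, so one must be careful that the identifications used in \S\ref{sect_Graphics} do not silently introduce extra signs, and one must pin down exactly how $\partial$ from \cite{etnclca} evaluates on such a mismatched-lattice double sequence, most likely by passing through the compactly generated subcategory $\mathsf{LCA}_{\mathfrak{A},cg}$ of Definition \ref{def_cg}, which is where the $\partial$-map of the localisation sequence is most transparent.
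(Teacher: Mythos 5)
Your overall strategy — compute $\partial\vartheta[P,\varphi,Q]$ directly and show it equals $[P]-[Q]$ — matches the paper's, and you correctly identify the Gillet--Grayson lifting and the compactly generated subcategory $\mathsf{LCA}_{\mathfrak{A},cg}$ as the key tools. However, your decomposition of the schematic runs in exactly the wrong direction, and this is not a technicality but the crux of the computation.

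You propose to \emph{peel off} the shift rows on $\bigoplus P$, $\bigoplus Q$, $\prod T_P$, $\prod T_Q$ as ``Eilenberg-swindle noise'' and then analyze a surviving ``core'' built from the lattice sequences $P\hookrightarrow P_{\mathbb{R}}\twoheadrightarrow T_P$ and $Q\hookrightarrow P_{\mathbb{R}}\twoheadrightarrow T_Q$. Two problems. First, the wiring $I', I, I''$ of the schematic entangles the rows: the summand $P$ in the left object is the kernel of the Yin row $P\to P_{\mathbb{R}}\to T_P$ \emph{and} of the Yang row $P\to\bigoplus P\overset{s}{\to}\bigoplus P$, so the shift rows are not self-contained sub-double-exact-sequences and cannot be removed in isolation by an Eilenberg-swindle argument. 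Second, and more fundamentally, the boundary map $\partial$ of \cite[Proposition 11.1]{etnclca} is not a lift along a fibration $K(\mathfrak{A})\to K(A_{\mathbb{R}})\to K(\mathsf{LCA}_{\mathfrak{A}})$ (that is not even a fiber sequence); it factors as $\partial^{\ast}\circ\Phi^{-1}\circ q$ through the quotient $\mathsf{LCA}_{\mathfrak{A}}/\mathsf{LCA}_{\mathfrak{A},cg}$. Under $q$, every object you want to keep dies: $P$, $Q$ (discrete f.g.), $P_{\mathbb{R}}$, $Q_{\mathbb{R}}$ (finite-dimensional real), $T_P$, $T_Q$, $\prod T_P$, $\prod T_Q$ (compact) are all compactly generated. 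Your entire ``core'' is sent to zero. What survives is precisely the $\bigoplus P$, $\bigoplus Q$ shift rows — the only non-compactly-generated objects in the schematic — and the boundary is extracted from the kernels $P$, $Q$ of the two shift maps $s$, giving the Gillet--Grayson endpoint $(Q,P)$, whose $\pi_0$-class by Remark \ref{rmk_CompatWithLiterature} is $[P]-[Q]$ (your $(P,Q)$ is the reverse pair and would give the wrong sign). In short, the $\bigoplus$-shift rows are not noise to discard: they are the carriers of the boundary information, and the lattice rows are the noise.
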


\begin{proof}
Let $[P,\varphi,Q]\in K_{0}(\mathfrak{A},\mathbb{R})$ be an arbitrary element.
Then under the top right arrow it is sent to%
\begin{equation}
\lbrack P]-[Q]\in K_{0}(\mathfrak{A})\text{,}\label{lmiur23}%
\end{equation}
see \cite[p. 215]{MR0245634} or \cite[Chapter II, Definition 2.10]{MR3076731}
for this map. Thus, it remains to see that we also get this if we follow the
square the other way:\ Firstly, the map $\vartheta$ sends $[P,\varphi,Q]$ to
the class of the double exact sequence underlying $\left\langle \left\langle
P,\varphi,Q\right\rangle \right\rangle $, i.e. the schematic%
\begin{equation}%
{\includegraphics[
height=1.855in,
width=2.4154in
]%
{gfx_1-eps-converted-to.pdf}%
}
\label{lmiur20_1}%
\end{equation}
Thus, we need to compute what the map $\partial$ in Diagram \ref{lcuup1} does
with this element. We begin by describing how the boundary map%
\[
\partial:K_{1}(\mathsf{LCA}_{\mathfrak{A}})\longrightarrow K_{0}(\mathfrak{A})
\]
arises in \cite{etnclca}. For this, we have to trace through our constructions
and go back to how this map was defined. This leads us to the proof of
\cite[Proposition 11.1]{etnclca}. In this proof, we first set up the diagram
\[%
\xymatrix{
{\mathsf{Mod}_{{\mathfrak{A}},fg}} \ar[r] \ar[d]_{g} & {\mathsf{Mod}%
_{\mathfrak{A}}} \ar[r] \ar[d] & {{\mathsf{Mod}_{\mathfrak{A}}}}%
/{{\mathsf{Mod}_{{\mathfrak{A}},fg}}} \ar[d]^{\Phi} \\
\mathsf{LCA}_{\mathfrak{A},cg} \ar[r] & \mathsf{LCA}_{\mathfrak{A}}
\ar[r] & {\mathsf{LCA}_{\mathfrak{A}}}/{\mathsf{LCA}_{\mathfrak{A},cg}}
\text{,}
}%
\]
where $\mathsf{Mod}_{\mathfrak{A},fg}$ denotes the category of finitely
generated right $\mathfrak{A}$-modules, $\mathsf{Mod}_{\mathfrak{A}}$ the
category of all right $\mathfrak{A}$-modules, $\mathsf{LCA}_{\mathfrak{A}%
,cg}=\mathsf{LCA}_{\mathfrak{A}}\cap\mathsf{LCA}_{cg}$ the category of
topological right $\mathfrak{A}$-modules whose underlying LCA group is
compactly generated. Both rows are exact sequences of exact categories. The
map $\Phi$ stems from the exact functor%
\[
\mathsf{Mod}_{\mathfrak{A}}/\mathsf{Mod}_{\mathfrak{A},fg}\longrightarrow
\mathsf{LCA}_{\mathfrak{A}}/\mathsf{LCA}_{\mathfrak{A},cg}\text{,}%
\]
which is induced from sending a right $\mathfrak{A}$-module to itself,
equipped with the discrete topology. It is shown loc. cit. that $\Phi$ is in
fact an exact equivalence of the given quotient exact categories. In
particular, it induces an equivalence of the level of $K$-theory. Applying
non-connective $K$-theory, we get two fiber sequences and then the argument
ibid. shows that the left square is bi-Cartesian in spectra. The long exact
sequences in Diagram \ref{lcuup1} now arise as the long exact sequences of
(stable) homotopy groups, using that the bi-Cartesian diagram has a
contractible node. From this, one unravels by diagram chase that the
differential%
\[
\partial:\pi_{1}K(\mathsf{LCA}_{\mathfrak{A}})\longrightarrow\pi
_{0}K\mathbb{(}\mathsf{Mod}_{\mathfrak{A},fg})
\]
agrees with the composition $\partial^{\ast}\circ\Phi^{-1}\circ q$ in the
following diagram%
\[%
\xymatrix{
\cdots\ar[r] & {{\pi}_1}K({\mathsf{Mod}_{\mathfrak{A}}}) \ar[r] \ar
[d] & {{\pi}_1}K({{\mathsf{Mod}_{\mathfrak{A}}}}/{{\mathsf{Mod}_{{\mathfrak
{A}},fg}}}) \ar@{=}[d]^{\Phi} \ar[r]^-{{\partial}^{\ast}} & {{\pi}%
_0}K({\mathsf{Mod}_{{\mathfrak{A}},fg}}) \ar[d] \\
{{\pi}_1}K(\mathsf{LCA}_{\mathfrak{A},cg}) \ar[r] & {{\pi}_1}K(\mathsf
{LCA}_{\mathfrak{A}}) \ar[r]_-{q} & {{\pi}_1}K({\mathsf{LCA}_{\mathfrak{A}}%
}/{\mathsf{LCA}_{\mathfrak{A},cg}}) \ar[r] & {{\pi}_0}K(\mathsf{LCA}%
_{\mathfrak{A},cg}),
}%
\]
where $\partial^{\ast}$ is the boundary map of the long exact sequence of
homotopy groups coming from the localization sequence in the top row, i.e.%
\[
\mathsf{Mod}_{\mathfrak{A},fg}\longrightarrow\mathsf{Mod}_{\mathfrak{A}%
}\longrightarrow\mathsf{Mod}_{\mathfrak{A}}/\mathsf{Mod}_{\mathfrak{A}%
,fg}\text{.}%
\]
This localization sequence ibid. was produced from\ Schlichting's localization
theorem, see \cite[Theorem 4.1]{obloc} for the formulation we use, or
\cite{MR2079996} for the proof. Several further remarks: The proof of
\cite[Theorem 11.3]{etnclca} shows that the non-connective $K$-theory of all
the involved categories agrees with their connective $K$-theory, i.e. their
usual Quillen algebraic $K$-theory. Thus, from now on view $K(-)$ as a
connective spectrum. We now unravel the maps in $\partial^{\ast}\circ\Phi
^{-1}\circ q$:\newline(Step 1) The map $q$ is induced from the exact quotient
functor%
\[
\mathsf{LCA}_{\mathfrak{A}}\longrightarrow\mathsf{LCA}_{\mathfrak{A}%
}/\mathsf{LCA}_{\mathfrak{A},cg}\text{.}%
\]
Thus, $q$ sends the schematic in Figure \ref{lmiur20_1} to itself, but
regarded in $\mathsf{LCA}_{\mathfrak{A}}/\mathsf{LCA}_{\mathfrak{A},cg}$. We
observe that $P$ is finitely generated projective and thus has underlying
LCA\ group isomorphic to $\mathbb{Z}^{n}$ for some $n\in\mathbb{Z}_{\geq0}$.
Since $T_{P}$ is a torus, it is compact, and by Tychonoff's Theorem, $\prod
T_{P}$ is also compact. Both arguments also work verbatim for $Q$ and $\prod
T_{Q}$. Further, the underlying LCA group of $P_{\mathbb{R}}$ is a
finite-dimensional real vector space. Thus, by the classification of compactly
generated LCA\ groups, all these objects are compactly generated, see
\cite[Theorem 2.5]{MR0215016}. As a result, for each such object we obtain an
isomorphism to the zero object in the category $\mathsf{LCA}_{\mathfrak{A}%
}/\mathsf{LCA}_{\mathfrak{A},cg}$. Thus, the class of $q(\left\langle
\left\langle P,\varphi,Q\right\rangle \right\rangle )$ is the double exact
sequence $l$, defined as%
\begin{equation}%
\begin{tabular}
[c]{ccccc}%
$0$ & $\longrightarrow$ & $\bigoplus P$ & $\overset{1}{\longrightarrow}$ &
$\bigoplus P$\\
$0$ & $\longrightarrow$ & $\bigoplus Q$ & $\overset{s}{\longrightarrow}$ &
$\bigoplus Q$\\\hline
$0$ & $\longrightarrow$ & $\bigoplus P$ & $\overset{s}{\longrightarrow}$ &
$\bigoplus P$\\
$0$ & $\longrightarrow$ & $\bigoplus Q$ & $\overset{1}{\longrightarrow}$ &
$\bigoplus Q$%
\end{tabular}
\ \ \ \ \ \label{lmiur20_2}%
\end{equation}
in the notation of Equation \ref{l_Schematic} (and the wiring is the obvious
one, that is: each object above the line is wired to its copy below the line;
this is the wiring which remains from Figure \ref{lmiur20_1}). We recall that
$s$ is the shift map of Equation \ref{l_Map_Shift}. Since the wiring is so
simple here, we stick to this graphical presentation from now on. It may
appear like a mistake that the shift maps $s$ are both depicted with kernel
\textquotedblleft$0$\textquotedblright, but this is accurate since while in
the category $\mathsf{LCA}_{\mathfrak{A}}$ the quotient would be $P$ resp.
$Q$, in the quotient category $\mathsf{LCA}_{\mathfrak{A}}/\mathsf{LCA}%
_{\mathfrak{A},cg}$ these objects are isomorphic to zero. Next, we need to
apply the map $\Phi^{-1}$. Being the inverse map to a functor, this is usually
a tough operation, but it is easy in our situation: Note that all objects in
Figure \ref{lmiur20_2} are discrete right $\mathfrak{A}$-modules, so we can
right away read this as the schematic for a double exact sequence in
$\mathsf{Mod}_{\mathfrak{A}}/\mathsf{Mod}_{\mathfrak{A},fg}$, which $\Phi$
obviously sends to itself. Finally, we need to apply $\partial^{\ast}$. This
is more delicate: Recall that%
\[
K\mathbb{(}\mathsf{Mod}_{\mathfrak{A},fg})\longrightarrow K(\mathsf{Mod}%
_{\mathfrak{A}})\longrightarrow K\mathbb{(}\mathsf{Mod}_{\mathfrak{A}%
}/\mathsf{Mod}_{\mathfrak{A},fg})
\]
is a fiber sequence in spectra by Schlichting's localization theorem. As all
spectra here are connective, we may read this as a fiber sequence of infinite
loop spaces as well. We take simplicial sets $\mathsf{sSet}$ as our $\infty
$-category of `spaces'. Next, use Gillet--Grayson's model to have a simplicial
set describing $K$-theory; we had reviewed what we need in \S \ref{subsect_GG}%
. Now, the boundary map%
\begin{equation}
\partial^{\ast}:\pi_{1}K(\mathsf{Mod}_{\mathfrak{A}}/\mathsf{Mod}%
_{\mathfrak{A},fg})\longrightarrow\pi_{0}K\mathbb{(}\mathsf{Mod}%
_{\mathfrak{A},fg})\label{lmiur21_2}%
\end{equation}
has the following topological meaning: We take a loop $\ell$ in the space
$K(\mathsf{Mod}_{\mathfrak{A}}/\mathsf{Mod}_{\mathfrak{A},fg})$ and lift it
along the fibration of spaces%
\[
K(\mathsf{Mod}_{\mathfrak{A}})\longrightarrow K(\mathsf{Mod}_{\mathfrak{A}%
}/\mathsf{Mod}_{\mathfrak{A},fg})\text{.}%
\]
This is possible, but only as a path which need not be closed, so we obtain a
path from the distinguished pointing of $K(\mathsf{Mod}_{\mathfrak{A}})$,
which is the point defined by $(0,0)$ for the zero object in the
Gillet--Grayson model, to some other point, which in concrete terms is a pair
$(A,B)$. The boundary map $\partial^{\ast}$ then sends $\ell$ to the connected
component in which this endpoint $(A,B)$ lies. Since we use the
Gillet--Grayson model, we need to perform all these steps simplicially. In the
case at hand, the double exact sequence $\Phi^{-1}q(\left\langle \left\langle
P,\varphi,Q\right\rangle \right\rangle )$ is our input loop, and we had seen
that the description of Figure \ref{lmiur20_2} is valid. In the
Gillet--Grayson model, a double exact sequence corresponds to the path%
\begin{equation}%
{\includegraphics[
height=1.2341in,
width=3.0969in
]%
{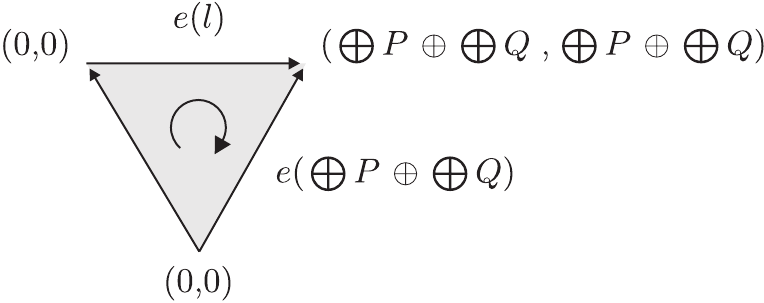}%
}
\label{lmiur21}%
\end{equation}
See Definition \ref{def_e_loop}. Recall that $l$ was the double exact sequence
of Figure \ref{lmiur20_2}. As explained above, we now need to lift this loop
to a path in the Gillet--Grayson space of $\mathsf{Mod}_{\mathfrak{A}}$. We
recall that an edge from the point $(P_{0},P_{0}^{\prime})$ to a point
$(P_{1},P_{1}^{\prime})$ in the Gillet--Grayson model corresponds to a pair of
exact sequences with equal cokernels. Concretely, we shall take%
\begin{equation}%
\xymatrix{
Q \ar@{^{(}.>}[r] & \bigoplus P\oplus\bigoplus Q \ar@{.>>}[r]^-{1 \oplus s}
& \bigoplus P\oplus\bigoplus Q & & P \ar@{^{(}->}[r] & \bigoplus
P\oplus\bigoplus Q \ar@{->>}[r]^-{s \oplus1}
& \bigoplus P\oplus\bigoplus Q
}%
\label{l_55}%
\end{equation}
which visibly have equal cokernels. This defines an edge (we stress that this
datum is different from a double exact sequence). We now replace the edge
$e(l)$ in Figure \ref{lmiur21} by this new edge. We get a non-closed path%
\begin{equation}%
{\includegraphics[
height=1.1243in,
width=3.1401in
]%
{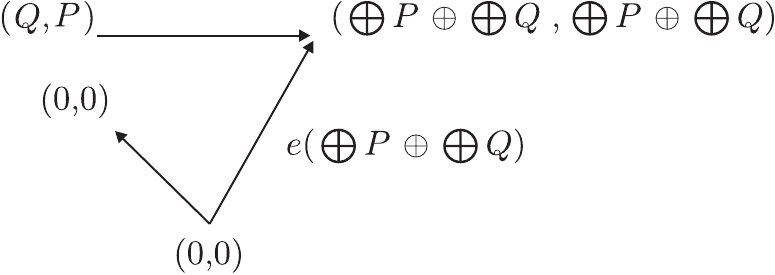}%
}
\label{lmiur32}%
\end{equation}
The key point is that this path also defines a path in $\pi_{1}K(\mathsf{Mod}%
_{\mathfrak{A}})$, much unlike Figure \ref{lmiur21} which would not since the
shift map $s$ is \textit{not} an isomorphism in the category $\mathsf{Mod}%
_{\mathfrak{A}}$. We claim that it is a lift along the fibration:\ Indeed,
under the map%
\[
\pi_{1}K(\mathsf{Mod}_{\mathfrak{A}})\longrightarrow\pi_{1}K(\mathsf{Mod}%
_{\mathfrak{A}}/\mathsf{Mod}_{\mathfrak{A},fg})
\]
the kernels $P$ and $Q$ which appear on the left in Equation \ref{l_55} again
become isomorphic to zero since they are finitely generated, and thus the
upper edge transforms to $e(l)$ again. As we had discussed above, the boundary
map $\partial^{\ast}$ of Equation \ref{lmiur21_2} now maps the lifted path to
the connected component of its endpoint. This endpoint is $(Q,P)$. However,
the identification%
\[
\pi_{0}K(\mathsf{Mod}_{\mathfrak{A},fg})\overset{\sim}{\longrightarrow}%
K_{0}(\mathsf{Mod}_{\mathfrak{A},fg})
\]
in terms of the Gillet--Grayson model is given by the map $(Q,P)\mapsto\lbrack
P]-[Q]$, see Remark \ref{rmk_CompatWithLiterature}. Hence, we have obtained
the same as in Equation \ref{lmiur23}, which is exactly what we had to show.
\end{proof}

\begin{proof}
[Proof of Theorem \ref{thm_IdentifySeq}]Using Lemma \ref{sw1} and Lemma
\ref{sw2}, we find that the two squares to the left as well as to the right of
the downward arrow $\vartheta$ in Equation \ref{lcuup1} commute. Since all
other downward arrows in these squares are isomorphisms, it follows that
$\vartheta$ is an isomorphism as well by the Five Lemma.
\end{proof}

\begin{conjecture}
There should be an elementary proof that $\vartheta$ is an isomorphism, solely
based on the generators and relations, and without truly touching algebraic
$K$-theory.
\end{conjecture}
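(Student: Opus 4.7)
The plan is to construct an explicit inverse map $\sigma : K_1(\mathsf{LCA}_{\mathfrak{A}})_{\operatorname*{Nenashev}} \to K_0(\mathfrak{A},\mathbb{R})$ purely at the level of generator-relator presentations, and then verify $\sigma\vartheta = \operatorname{id}$ and $\vartheta\sigma = \operatorname{id}$ by direct manipulation of double exact sequences and Bass--Swan triples. The first step would be to establish a \emph{normal form} for Nenashev generators: using the classification of compactly generated LCA groups from \cite{MR0215016}, adapted to $\mathfrak{A}$-modules as in \cite{etnclca}, every object in $\mathsf{LCA}_{\mathfrak{A}}$ slots into an exact sequence with vector, compact, and discrete pieces. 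Combined with the Eilenberg-swindle arguments in the proof of Lemma \ref{Lemma_SwapIsZeroInLCAA}, one would hope to prove that every double exact sequence is equivalent under Nenashev's relations to one whose entries are built from data of the form $P$, $P_{\mathbb{R}}$, $T_P$, together with the harmless swindle summands $\bigoplus P$ and $\prod T_P$ that appear in $\langle\langle P,\varphi,Q\rangle\rangle$.

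Second, in such a normal form, the isomorphism datum $\alpha:P_\mathbb{R} \xrightarrow{\sim} Q_\mathbb{R}$ should be readable as the discrepancy between the Yin and Yang maps restricted to the vector piece, and one would set $\sigma$ to send the normalized representative to $[P,\alpha,Q]$. Well-definedness of $\sigma$ would then require checking that Nenashev's $(3\times 3)$-relation \eqref{l_C_Nenashev} translates into Bass--Swan's Relation A, and that the Yin-equals-Yang relation \eqref{l_C2_NenaAgreeIsZero} corresponds to $[P,\operatorname{id},P] = 0$ (itself already verified in Lemma \ref{sv1}). The identity $\sigma\vartheta = \operatorname{id}$ should then follow by collapsing the swindle pieces in $\langle\langle P,\varphi,Q\rangle\rangle$, in the spirit of the quotient-passage arguments in the proof of Lemma \ref{sw2} but carried out entirely inside $K_1(\mathsf{LCA}_{\mathfrak{A}})$ rather than through Schlichting localization. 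The identity $\vartheta\sigma = \operatorname{id}$ would recycle the manipulations in Lemma \ref{sv2}, applied in reverse, to recover $\langle\langle P,\alpha,Q\rangle\rangle$ from the normalized form.

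The main obstacle I foresee is the first step: realising the normal form without secretly re-using $K$-theoretic input. One would have to prove, for every double exact sequence in $\mathsf{LCA}_{\mathfrak{A}}$, that it is equivalent to a sum of standard pieces using only Nenashev's relations plus swindle tricks, and that different choices of such a decomposition differ by moves corresponding to Bass--Swan's Relations A and B. This is essentially a combinatorial reformulation of the content of \cite[Theorem 11.2]{etnclca}, and although the swindle technology of Lemma \ref{Lemma_SwapIsZeroInLCAA} makes it plausible, it requires a structural normalization statement for the Nenashev presentation that does not appear in \cite{MR1409623, MR1621690, MR1637539}. The delicacy is compounded for general (non-regular) orders $\mathfrak{A}$, where the splitting into vector, compact, and discrete parts is less well-behaved; this is likely the reason the author had to pass through a simplicial model in the first place.
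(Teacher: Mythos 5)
This is stated in the paper as an open \emph{conjecture}, and the paper supplies no elementary proof; its actual proof that $\vartheta$ is an isomorphism (Theorem~\ref{thm_IdentifySeq}) passes through the Gillet--Grayson simplicial model and the Five Lemma precisely because the elementary route is not available. Your proposal is therefore not a proof to be checked against one in the paper: it is a strategy, and, notably, it is essentially the strategy the author himself floats in the very next conjecture (``There should be a recipe to map a double exact sequence to a Bass--Swan generator\ldots''). You have re-derived the author's wish list rather than answered it.

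The genuine gap is the one you name yourself, and it is fatal as things stand. Your Step~1 --- a presentation-theoretic normal form for Nenashev generators in $\mathsf{LCA}_{\mathfrak{A}}$ --- is not a lemma to be deferred; it \emph{is} the surjectivity of $\vartheta$, which is the content of the theorem you are trying to reprove. The structure theory of compactly generated LCA groups and the swindles of Lemma~\ref{Lemma_SwapIsZeroInLCAA} only kill very specific classes ($s_X$, $K_1$ of compacts, $K_1$ of discretes); they do not tell you how to take an arbitrary double exact sequence with arbitrary wiring and march it, by finitely many $(3\times 3)$-moves, into the shape $\langle\langle P,\varphi,Q\rangle\rangle$. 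That requires a normalization theorem internal to the Nenashev presentation which does not exist in \cite{MR1409623,MR1621690,MR1637539} and which nobody has proved. Without it there is also no way to even \emph{define} your candidate inverse $\sigma$, since ``read off $\alpha$ from the discrepancy on the vector piece'' only makes sense once a representative of the right shape has been produced. A secondary worry you do not flag: well-definedness of $\sigma$ would require translating Nenashev's $(3\times 3)$-relation into Bass--Swan's Relations~A \emph{and}~B, and the $(3\times 3)$-relation mixes additivity and multiplicativity in a way that does not obviously split along those lines; ``reversing'' Lemma~\ref{sv2} is likewise not a mechanical operation, since that lemma consumes Relation~B as a hypothesis rather than producing it. In short: the plan is reasonable and matches the author's own speculation, but the step you would need to make it a proof is exactly the step the paper circumvents by going simplicial, and you have not supplied it.
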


More might be possible:

\begin{conjecture}
There should be a recipe to map a double exact sequence to a Bass--Swan
generator. It should be possible to check in an elementary (but probably
complicated) fashion that composing both maps either way yields the identity map.
\end{conjecture}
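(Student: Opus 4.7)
The plan is to construct a candidate inverse $\psi\colon K_{1}(\mathsf{LCA}_{\mathfrak{A}})_{\operatorname*{Nenashev}}\to K_{0}(\mathfrak{A},\mathbb{R})$ through a normal-form reduction on Nenashev generators, and then to verify $\psi\circ\vartheta=\mathrm{id}$ and $\vartheta\circ\psi=\mathrm{id}$ using only Bass--Swan's Relations A and B together with Nenashev's $(3\times 3)$-relation and the identity-Yin-equals-Yang relation of Equation \ref{l_C2_NenaAgreeIsZero}. First I would define $\psi$ on the class of \emph{lattice-shape generators}, namely those Nenashev double exact sequences whose three objects are respectively a finitely generated projective $\mathfrak{A}$-module (discretely topologized), a vector $\mathfrak{A}$-module, and a real torus, occurring as the middle-cross block $Q\hookrightarrow P_{\mathbb{R}}\twoheadrightarrow T_{Q}$ (Yang) versus $P\hookrightarrow P_{\mathbb{R}}\twoheadrightarrow T_{P}$ (Yin) of the schematic in Equation \ref{l_S1}. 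On such a generator set $\psi:=[P,\varphi,Q]$.

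Second, extend $\psi$ to an arbitrary double exact sequence $\kappa$ by a normalization algorithm. Applying \cite[Lemma 6.5]{etnclca} to each of the three objects of $\kappa$ and splicing the resulting short exact sequences into $\kappa$ via the $(3\times 3)$-relation, one writes $[\kappa]$ as a sum of double exact sequences whose three slots individually have pure compact, pure vector, or pure discrete type. Pure compact and pure discrete summands vanish by the Eilenberg swindles in $\mathsf{LCA}_{\mathfrak{A},C}$ and in the discrete subcategory, as used in Lemma \ref{Lemma_SwapIsZeroInLCAA}. A further $(3\times 3)$-manipulation peels off the lattice-inside-vector-space data of the mixed summands and brings each of them to lattice shape; then $\psi(\kappa)$ is defined as the resulting alternating sum of Bass--Swan classes.

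For $\psi\circ\vartheta=\mathrm{id}$: in the schematic of Equation \ref{l_S1}, every row outside the middle cross has either the identity or a shift map on both Yin and Yang, so those rows contribute zero under $\psi$ by Equation \ref{l_C2_NenaAgreeIsZero} together with Lemma \ref{Lemma_SwapIsZeroInLCAA}. The surviving rows form the lattice-shape core with $\psi$-value $[P,\varphi,Q]$. For $\vartheta\circ\psi=\mathrm{id}$: by construction of $\psi$ it suffices to evaluate the composite on lattice-shape generators, where direct comparison with the definition of $\langle\langle P,\varphi,Q\rangle\rangle$ shows that the $\prod T$ and $\bigoplus$ tails contribute zero (again by Lemma \ref{Lemma_SwapIsZeroInLCAA}), leaving the lattice core intact.

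The hard step will be showing that $\psi$ is well defined on all of $K_{1}(\mathsf{LCA}_{\mathfrak{A}})_{\operatorname*{Nenashev}}$, i.e.\ that the normalization algorithm respects Nenashev's $(3\times 3)$-relation. That relation mixes the compact, vector, and discrete slots across different double exact sequences, so well-definedness amounts to an independence statement: the output Bass--Swan triple must be insensitive to the order in which the three slots are decomposed, to the auxiliary short exact sequences used to expose the lattice data, and to the splittings chosen when moving vector pieces between the Yin and Yang sides. Reducing this independence to a finite chain of Bass--Swan Relations A and B, without invoking the Gillet--Grayson or Schlichting framework of \S\ref{subsect_GG}, is exactly the combinatorial task that the conjecture anticipates as being ``probably complicated''.
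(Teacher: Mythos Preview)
The paper does not prove this statement: it is recorded as a \emph{conjecture}, immediately after the preceding conjecture that $\vartheta$ should admit an elementary proof of isomorphy. There is therefore no ``paper's own proof'' to compare against; the paper establishes isomorphy only indirectly, via the Five Lemma applied to Diagram~\ref{lcuup1} and the Gillet--Grayson computation in Lemma~\ref{sw2}.

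Your sketch contains a structural error that would need to be repaired before the plan can go forward. A Nenashev double exact sequence requires the three \emph{objects} to coincide literally between the Yin and Yang rows; only the morphisms may differ. Consequently your ``lattice-shape generator''
\[
P\hookrightarrow P_{\mathbb{R}}\twoheadrightarrow T_{P}\quad(\text{Yin}),\qquad Q\hookrightarrow P_{\mathbb{R}}\twoheadrightarrow T_{Q}\quad(\text{Yang})
\]
is not a double exact sequence unless $P=Q$ and $T_{P}=T_{Q}$ as objects of $\mathsf{LCA}_{\mathfrak{A}}$. This is precisely why $\langle\langle P,\varphi,Q\rangle\rangle$ is so bulky: the extra summands $\bigoplus P$, $\bigoplus Q$, $\prod T_{P}$, $\prod T_{Q}$ and the wiring isomorphisms $I',I,I''$ of \S\ref{sect_Graphics} are there to \emph{force} the left, middle, and right objects to agree across Yin and Yang. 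In particular the schematic in Equation~\ref{l_S1} is a \emph{single} double exact sequence; its individual rows are ordinary exact sequences, not Nenashev generators, so one cannot speak of ``rows outside the middle cross contributing zero under~$\psi$'' as if they were separate classes in $K_{1}(\mathsf{LCA}_{\mathfrak{A}})$.

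A viable version of your plan would have to define $\psi$ on genuine double exact sequences of the form $D\hookrightarrow V\twoheadrightarrow T$ (discrete, vector, torus) with two choices of maps, extract from the two lattice embeddings $D\rightrightarrows V$ a Bass--Swan class $[D,\varphi,D]$ where $\varphi$ is the induced automorphism of $D_{\mathbb{R}}$, and then show that the normalization of an arbitrary $\kappa$ to such a form is independent of choices. The honest acknowledgement in your final paragraph that well-definedness under the $(3\times 3)$-relation is the crux is correct; but the earlier steps, as written, already fail at the level of what counts as a Nenashev generator.
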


\begin{remark}
Using the work of Grayson \cite{MR2947948}, the higher $K$-groups
$K_{n}(\mathsf{LCA}_{\mathfrak{A}})$ all have explicit presentations
generalizing Nenashev's presentation for $K_{1}$. The compatibility between
the Nenashev and Grayson presentations is settled first in \cite{kaspwinges},
and even better in \cite{kaspkoeckwinges}. Furthermore, the long exact
sequence%
\[
\cdots\longrightarrow K_{n}(\mathfrak{A})\longrightarrow K_{n}(A_{\mathbb{R}%
})\longrightarrow K_{n-1}(\mathfrak{A},\mathbb{R})\longrightarrow
K_{n-1}(\mathfrak{A})\longrightarrow\cdots
\]
involving relative $K$-groups can also be generalized to higher $n>0$, and the
higher relative $K$-groups $K_{n-1}(\mathfrak{A},\mathbb{R})$ admit a similar
concrete model in the style of Grayson, see \cite[Corollary 2.3]%
{graysonrelative}. Perhaps one could extend the above proof and isolate a
concrete formula for the isomorphism%
\[
K_{n}(\mathfrak{A},\mathbb{R})\overset{\sim}{\longrightarrow}K_{n+1}%
(\mathsf{LCA}_{\mathfrak{A}})\text{,}%
\]
of Theorem \ref{thm_IdentifySeq} for all $n\geq0$. It would be based on the
corresponding relative versus absolute Grayson presentation using binary multi-complexes.
\end{remark}

\bibliographystyle{amsalpha}
\bibliography{ollinewbib}
\end{document}